\documentclass[11pt]{amsart}

\usepackage{amsmath, amsthm, latexsym, amssymb, amsfonts, epsf, xcolor}
\usepackage[hidelinks]{hyperref}
\usepackage{caption, subcaption, enumerate, color}
\usepackage{multirow,multicol}
\usepackage{mathtools}
\usepackage{bm}
\usepackage{enumitem}
\usepackage{blkarray}

\usepackage{booktabs}
\usepackage{framed}
\usepackage{setspace}
\usepackage{tikz}
\usepackage{graphicx}
\usepackage{wrapfig}
\usepackage{float}
\usepackage[dvips]{epsfig}
\usepackage{bbm}
\usepackage{url}
\usepackage{booktabs}
\usepackage{bibentry}
\usepackage[normalem]{ulem}
\usepackage{comment}
\usepackage{lingmacros}
\usepackage{cleveref}
\usepackage{extarrows}
\usepackage{multirow}
\usepackage{mathtools}
\usepackage{enumerate}
\usepackage{array}
\usepackage{bigstrut}
\usepackage{exscale,relsize}
\usepackage{graphicx}
\usepackage[utf8]{inputenc}
\usepackage[linesnumbered, ruled]{algorithm2e}
\usepackage{cite}

\newcommand{\F}{{\mathbb F}}

\newcommand{\regt}{\mathtt{reg}}

\numberwithin{equation}{section}
\newtheorem{theorem}{Theorem}[section]
\newtheorem{lemma}[theorem]{Lemma}
\newtheorem{proposition}[theorem]{Proposition}
\newtheorem{corollary}[theorem]{Corollary}

\theoremstyle{definition}
\newtheorem{definition}[theorem]{Definition} 
\newtheorem{remark}[theorem]{Remark}
\newtheorem{example}[theorem]{Example}

\newcommand{\rmv}[1]{}

\DeclareMathOperator{\Dom}{Dom}

\newcommand{\Ass}{\mathtt{Ass}}

\newif\ifcomment
\commentfalse 

\begin{document}


\title[t-graph]{The $\mathrm{v}$-number of $t$-closed neighborhood ideals}

\author[Jaramillo-Velez]{Delio Jaramillo-Velez}
\address[Delio Jaramillo-Velez]{
Departamento de Matemáticas, Estadística e Investigación Operativa.
Instituto Universitario de Matemáticas y Aplicaciones (IMAULL). 
Universidad de La Laguna. La Laguna, Tenerife, Spain
}
\email{djaramil@ull.edu.es}

\author[H. L\'opez]{Hiram H. L\'opez}
\address[Hiram H. L\'opez]{Department of Mathematics\\ Virginia Tech\\ Blacksburg, VA USA}
\email{hhlopez@vt.edu}

\author[Rodrigo San-José]{Rodrigo San-José}
\address[Rodrigo San-José]{Department of Mathematics\\ Virginia Tech\\ Blacksburg, VA USA}
\email{rsanjose@vt.edu}

\author[N. Shalqini]{Nart Shalqini}
\address[Nart Shalqini]{Department of Mathematics\\ Virginia Tech\\ Blacksburg, VA USA}
\email{nart@vt.edu}

\thanks{Hiram H. L\'opez was partially supported by the NSF grant DMS-2502705. Rodrigo San-José was partially supported by the NSF grant DMS-2401558, the Commonwealth Cyber Initiative, an AMS-Simons Travel Grant, and by Grant PID2022-138906NB-C21 funded by MICIU/AEI/10.13039/501100011033 and by ERDF/EU}
\subjclass[2020]{Primary 13F55; Secondary 05C69, 05E40, 13D02}
\keywords{$\mathrm{v}$-number, $t$-closed neighborhood ideals,
square-free monomial ideals, distance domination,
Castelnuovo--Mumford regularity, Hamming graphs}

\begin{abstract}
We give a combinatorial characterization of the $\mathrm{v}$-number of a $t$-closed neighborhood ideal associated to a graph $G$, and investigate its relation with other invariants such as the $t$-domination number and the smallest size of a ball of radius $t$ in the graph $G$. We present several new cases where \cite[Conjecture 4.9]{JLS_v_number_neighborhood} is satisfied; that is, the  $\mathrm{v}$-number is bounded above by the Castelnuovo-Mumford regularity. We analyze the behavior of the $\mathrm{v}$-number of a $t$-closed neighborhood ideal with respect to the parameter $t$. Lastly, we compute explicit formulas for the $\mathrm{v}$-number in the case of paths, cycles, and caterpillars, and analyze the case of Hamming graphs.
\end{abstract}

\maketitle

\section{Introduction} \label{S:intro}
Computing invariants of ideals, such as Krull dimension, projective dimension, or Castelnuovo-Mumford regularity, or constructing ideals with certain properties, is a difficult task, and
by constructing them from graphs, we can obtain families of ideals in which we have combinatorial control over the
invariants and properties ~\cite{moradi2026homological,Villarreal2026,edgeideal}.

This process was first proposed by Rafael H. Villarreal with the initiation of edge ideal theory. The generators of an edge ideal are given by square-free monomials of degree two associated with the edges of the graph. Under this construction, the independence number coincides with the Krull dimension of the quotient~\cite{introedg}, and the Castelnuovo-Mumford regularity is bounded below by the induced matching number of the graph~\cite[Lemma~2.2]{Katzman2006} (see also~\cite{induced}).
 This development has motivated researchers to propose new constructions and to analyze other graph invariants. For instance, path ideals, $t$-clique ideals, binomial edge ideals and closed neighborhood ideals~\cite{conca-negri,t-clique,binomial-edge,def-NG}. The latter captures the notion of domination of the graph through the primary decomposition of the ideal~\cite{def-NG}. 

In this work, we focus on the combinatorial meaning of the Vasconcelos number of the $t$-closed neighborhood ideal associated to $G$. The concept of $t$-closed neighborhood ideals was proposed by Leila Sharifan as an extension of the concept of closed neighborhood ideals by choosing as generators the representation, by a square-free monomial, of the closed neighborhood of each vertex at distance at most $t$. This class of ideals captures the combinatorial notion of distance domination of the graph through their primary decomposition, and the combinatorial meaning of the minimal set of generators, the depth, and Cohen-Macaulayness have been analyzed in the original paper~\cite{intro_t_closed}.

The Vasconcelos number, or just the $\mathrm{v}$-number, is an algebraic invariant of a graded ideal $I$ in a polynomial ring $S:=\mathbb{K}[x_1,\dots,x_n]$ over a field $\mathbb{K}$, defined as
\begin{equation*}
\mathrm{v}(I) := \min \left\{ d \geq 0 \;\middle|\; \exists \ f \in S_d \text{ and } \mathfrak{p} \in \Ass(I) \text{ with } (I:f)=\mathfrak{p} \right\},
\end{equation*}
where $S_d$ denotes the $d$-th graded component of $S$, $\Ass(I)$ denotes the set of associated primes of $I$, and $(I:f)$ is the colon ideal~\cite{intro-vnumber}.

The $\mathrm{v}$-number has served as a source of algebraic estimations of graph invariants thanks to ideals associated with graphs. One of the most well-known of these ideals is the edge ideal $I(G)$~\cite{introedg}. Here, the $\mathrm{v}$-number of $I(G)$ corresponds to the minimal cardinality of an independent set of $G$ such that its neighbors are a minimal vertex cover~\cite{vnumberedge, induced}. Such a characterization provides algebraic conditions to derive conclusions about the graph~\cite{vnumberedge}; for example, an algebraic classification of $W_2$ graphs. For the case of closed neighborhood ideals, the Vasconcelos number estimates the complexity of the external and self private neighbors of the graph $G$~\cite{JLS_v_number_neighborhood}. One of the main goals of this work is to characterize the $\mathrm{v}$-number of $t$-closed neighborhood ideals in terms of minimal $t$-dominating sets and their $t$-private neighbors. We also analyze the relation between the $\mathrm{v}$-number of $t$-closed neighborhood ideals and other invariants, such as the Castelnuovo-Mumford regularity and the smallest size of a ball of radius $t$ in the graph $G$.

The paper proceeds as follows. In Section~\ref{S:Pre} we present preliminary results on commutative algebra and graphs; for more details see~\cite{Villarreal2026}. Section~\ref{S:v-number} is devoted to combinatorial characterization of the Vasconcelos number. In Section~\ref{S:vVSreg} we analyze the relation between the $\mathrm{v}$-number and the Castelnuovo-Mumford regularity. In Section~\ref{S:path-trees-cycles} we focus on the calculation of the $\mathrm{v}$-number of paths, caterpillars, and cycles. In Section~\ref{S:monotonic} we discuss the behavior of the $\mathrm{v}$-number of the $t$-closed neighborhood ideal with respect to the parameter $t$. Lastly, in Section~\ref{S:hamming} we estimate the $\mathrm{v}$-number of the $t$-closed neighborhood ideal of the Hamming graph.


\section{Preliminaries}\label{S:Pre}

In this section, we state some known results and definitions from graph theory and commutative algebra, such as the $t$-power graph, $t$-domination, $t$-closed neighborhood ideals, and Castelnuovo-- Mumford regularity. Throughout the paper, $t$ is a positive integer, $G$ is a finite simple graph, and, unless other notation is specified, we denote $V(G)=\{v_1,\dots,v_n\}$. In \cite{vnumberedge}, the authors prove that the $\mathrm{v}$-number is additive over connected components, and thus we may restrict $G$ to be connected in some of the results. 

\begin{definition}
    Given a graph $G$, the \emph{$t$-closed neighborhood} of a vertex $u \in V(G)$ is the set of all vertices at distance at most $t$ from $u$:
    \begin{equation*}
        N_t[u] = \{\, v \in V(G) : d_G(u, v) \leq t \,\},
    \end{equation*}
where $d_{G}(u,v)$, the graph distance, is the minimum number of edges between $u$ and $v$. The ambient graph is understood from the context. For $U\subseteq V(G)$, we write $N_t[U]=\bigcup_{u\in U}N_t[u]$.
\end{definition}

Next, we introduce a graph construction that is useful in describing the properties of the $t$-closed neighborhood ideals of $G$.

\begin{definition}[\hspace{1pt}\cite{graph_theory}]
  Let $G$ be a connected graph and let $t \geq 1$. The \emph{$t$-th power graph} of $G$, denoted $G^t$, has vertex set $V(G^t)=V(G)$ and edge set
  \begin{equation*}
    E(G^t) = \{\, \{u,v\} : u \neq v \text{ and } d_G(u,v) \leq t \,\},
  \end{equation*}
  that is, we join every pair of distinct vertices that are at distance at most $t$ in $G$.
\end{definition}

\begin{example}
Let $G$ be the path on six vertices. Then $V(G^2)=V(G)$, while $E(G^2)$ consists of the edges of the original path together with the additional edges joining the pairs of vertices at distance exactly $2$ in $G$.

\begin{center}
\begin{tikzpicture}[
  v/.style={circle, draw, fill=white, minimum size=6mm, inner sep=0pt, font=\footnotesize}
]
  \node[font=\footnotesize] at (0.25,0) {$G$};
  \foreach \i in {1,...,6}
    \node[v] (a\i) at (1.2*\i,0) {$v_{\i}$};
  \foreach \i [evaluate=\i as \j using int(\i+1)] in {1,...,5}
    \draw (a\i) -- (a\j);
  \begin{scope}[yshift=-1.7cm]
    \node[font=\footnotesize] at (0.25,0) {$G^{2}$};
    \foreach \i in {1,...,6}
      \node[v] (b\i) at (1.2*\i,0) {$v_{\i}$};
    \foreach \i [evaluate=\i as \j using int(\i+1)] in {1,...,5}
      \draw (b\i) -- (b\j);
    \foreach \i [evaluate=\i as \j using int(\i+2)] in {1,...,4}
      \draw[dashed] (b\i) to[bend left=45] (b\j);
  \end{scope}
\end{tikzpicture}
\end{center}
\end{example}

Next, we define $t$-dominating sets, in analogy with dominating sets; they play a central role in characterizing the $\mathrm{v}$-number of the $t$-closed neighborhood ideals of $G$.

\begin{definition}\label{def:t-domin,t-indep}
     Let $D \subseteq V(G)$. We say that $D$ is a \emph{$t$-dominating set} of $G$ if for every $v \in V(G)$ there exists $u \in D$ such that $v \in N_t[u]$. We say that $U$ $t$-dominates $D$ if $D\subset N_t[U]$.    
     We say $D$ is a $t$-independent set of $G$ if $d_{G}(u,v)>t$ for every pair of distinct vertices $u,v\in D$.
\end{definition}

\begin{example}\label{ex:3_dominating}
    In the graph below, $D = \{c\}$ is a $3$-dominating set of $G$, but it is not a $2$-dominating set of $G$.
\begin{figure}[ht!]
\begin{center}
\begin{tikzpicture}[
  v/.style={circle, draw, fill=white, minimum size=5pt, inner sep=0pt},
  d/.style={v, fill=black}
]
  \node[d, label={[label distance=2pt]below:{\footnotesize $c$}}] (c) at (0,0) {};
  \foreach \a/\name in {90/u, 210/l, 330/r}{
    \node[v] (\name1) at (\a:1) {};
    \node[v] (\name2) at (\a:2) {};
    \node[v] (\name3) at (\a:3) {};
    \draw (c) -- (\name1) -- (\name2) -- (\name3);
  }
\end{tikzpicture}
\end{center}
    \caption{$D=\{c\}$ is a $3$-dominating set of $G$.}
    \label{F:dominating}
\end{figure}

\end{example}

Following the notion of private neighbors~\cite{prmaximizing}, we introduce the notion of $t$-private neighbors (e.g., see \cite{hansberg_distance_domination}). 

\begin{definition}\label{def:t-private}
    Let $G$ be a graph, and $D$ a subset of vertices. A vertex $v \in V(G)$ is a \emph{$t$-private neighbor} of $D$ if $|N_t[v] \cap D| = 1$. Moreover,
\begin{itemize}[label=\textbullet]
    \item if $v \in D$, then $v$ is a \emph{self} $t$-private neighbor of $D$;
    \item if $v \notin D$, then $v$ is an \emph{external} $t$-private neighbor of $D$.
\end{itemize}
We write $P_G^t[D]$ for the set of all $t$-private neighbors of $D$ in $G$.
\end{definition}

Next, we introduce a series of graph invariants which we use to characterize the Vasconcelos number combinatorially.

\begin{definition}[\hspace{1pt}\cite{graph_theory}]\label{def:invariants}
Let $G$ be a graph. We use the following invariants; the first three are standard.
\begin{itemize}
    \item Given a vertex $v\in V(G)$ the eccentricity of $v$ is given by 
    $$\epsilon(v):=\max_{u\in V(G)}d_G(v,u).$$
    \item The radius of $G$ is defined as
    $${\rm rad}(G):=\min_{v\in V(G)}\epsilon(v).$$
    \item The diameter of $G$ is defined as
    $${\rm diam}(G):=\max_{v\in V(G)}\epsilon(v).$$
    \item The smallest volume of a ball of radius $t$ in $G$ is
    $$\delta_t(G) := \min_{v \in V(G)}|N_t[v]|.$$
    \item The $t$-domination number of $G$, $\gamma_{t}(G)$, is the cardinality of a smallest $t$-dominating set.
    \item The $t$-independence number of $G$, $\alpha_{t}(G)$, is the cardinality of a largest $t$-independent set.
    \item The upper domination number of $G$ is defined as
    $$
  \Gamma(G)=\max\{|D|:D\text{ is a minimal dominating set of }G\}.
  $$
\end{itemize}

\end{definition}

We now introduce some algebraic notions, including the $t$-closed neighborhood ideal, Betti numbers, and Castelnuovo-Mumford regularity. 
\begin{definition}[\hspace{1pt}\cite{intro_t_closed}]
For a set of vertices $A$, we let $\mathbf{x}_A = x_1^{c_1}\dots x_n^{c_n}$ be the square-free monomial whose support equals $A$; i.e.,  $c_i = 1$ if and only if $v_i \in A$. Then, the $t$-closed neighborhood ideal of $G$ is defined as 
    \begin{equation*}
        \mathcal{N}_G^t = \langle \mathbf{x}_{N_t[u]} \ : u \in V(G) \rangle,
    \end{equation*}
i.e., it is an ideal generated by square-free monomials parametrized by $t$-closed neighborhoods of all vertices of $G$. It follows that the closed neighborhood ideal $\mathcal{N}_G^1$ introduced in \cite{def-NG} is simply the $1$-closed neighborhood ideal of $G$.
\end{definition}
\begin{example}\label{ex:neighborhood-ideals-path}
Let $G$ be the path on six vertices $v_1,\ldots,v_6$, in this
order. In $S=\mathbb{K}[x_1,\ldots,x_6]$,
where $x_i$ corresponds to $v_i$, we have
\[
\begin{aligned}
\mathcal{N}_G^1
&= \langle x_1x_2,\ x_2x_3x_4,\ x_3x_4x_5,\ x_5x_6\rangle,\\
\mathcal{N}_G^2
&= \langle x_1x_2x_3,\ x_4x_5x_6\rangle,\\
\mathcal{N}_G^3
&= \langle x_1x_2x_3x_4,\ x_3x_4x_5x_6\rangle.
\end{aligned}
\]
All displayed generating sets are minimal. For instance, the
monomial $x_1x_2x_3$ corresponding to $N_1[v_2]$ is redundant
because it is divisible by $x_1x_2$, which corresponds to
$N_1[v_1]$.
\end{example}

For a proper homogeneous ideal $I\subseteq S$, consider the minimal graded free resolution of $S/I$ as an $S$-module:
$$
0 \to \bigoplus_j S(-j)^{b_{g,j}}
\to \cdots \to
\bigoplus_j S(-j)^{b_{1,j}}
\to S \to S/I \to 0.
$$
The numbers $b_{i,j}$ are the \emph{graded Betti numbers} of the minimal resolution of $S/I$, and the \emph{Castelnuovo--Mumford regularity} of $S/I$, or simply the \emph{regularity} of $S/I$, is defined as
$$
\regt(S/I) := \max\{j-i \mid b_{i,j} \neq 0\}.
$$
The integer $g$, denoted by $\mathtt{pd}(S/I)$, is the \emph{projective dimension} of $S/I$; see~\cite{intro-vnumber, Villarreal2026}.

Let $\mathfrak m=\langle x_1,\ldots,x_n\rangle$ be the homogeneous maximal ideal of $S$. The \emph{depth} of $S/I$, denoted by
$\operatorname{depth}(S/I)$, is the maximum length of an
$S/I$-regular sequence in $\mathfrak m$. Here a sequence
$f_1,\ldots,f_r\in\mathfrak m$ is $S/I$-regular if multiplication by $f_i$ is injective on
$S/(I+\langle f_1,\ldots,f_{i-1}\rangle)$ for every
$1\leq i\leq r$; see~\cite[Sections~1.1 and~1.2]{BrunsHerzog1998}.
The Auslander--Buchsbaum formula
\cite[Theorem~1.3.3]{BrunsHerzog1998} gives
\begin{equation}\label{eq:auslander-buchsbaum}
    \operatorname{depth}(S/I)+\mathtt{pd}(S/I)=n.
\end{equation}
We always have $\operatorname{depth}(S/I)\leq\dim(S/I)$, and $S/I$ is called \emph{Cohen--Macaulay} when equality holds; see~\cite[Proposition~1.2.13 and Definition~2.1.1]{BrunsHerzog1998}.

\section{the $\mathrm{v}$-number of $t$-closed neighborhood ideals}\label{S:v-number}

Next, we state a formula for  $\mathrm{v}$-number of $\mathcal{N}^t_{G}$ and we relate this invariant with other algebraic and combinatorial invariants such as the Castelnuovo-Mumford regularity and the radius of a graph.  

\begin{proposition}\label{trivial1}
We have that 
 \begin{equation*}
    \mathcal{N}_G^t = \mathcal{N}_{G^t}^1.
\end{equation*}
\end{proposition}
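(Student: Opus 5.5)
The plan is to show that the two ideals have literally the same generating set. Both ideals live in the same polynomial ring $S=\mathbb{K}[x_1,\dots,x_n]$, since $V(G^t)=V(G)$. By definition,
\[
\mathcal{N}_G^t=\langle \mathbf{x}_{N_t[u]} : u\in V(G)\rangle
\quad\text{and}\quad
\mathcal{N}_{G^t}^1=\langle \mathbf{x}_{N^{G^t}_1[u]} : u\in V(G^t)\rangle,
\]
where $N^{G^t}_1[u]$ denotes the $1$-closed neighborhood of $u$ computed in the graph $G^t$. It therefore suffices to prove the set equality $N_1^{G^t}[u]=N_t[u]$ for every vertex $u$. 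The monomials $\mathbf{x}_A$ then coincide vertex by vertex, and the two generating sets are identical.

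For the set equality, I would unwind the definitions. A vertex $v$ lies in $N_1^{G^t}[u]$ if and only if $v=u$ or $\{u,v\}\in E(G^t)$, since in a simple graph $d_{G^t}(u,v)\le 1$ means exactly this. By the definition of the power graph, $\{u,v\}\in E(G^t)$ holds if and only if $u\neq v$ and $d_G(u,v)\le t$. The case $v=u$ is also covered, because $d_G(u,u)=0\le t$. Combining the two cases gives $v\in N_1^{G^t}[u]$ if and only if $d_G(u,v)\le t$, which is precisely $v\in N_t[u]$.

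There is no genuine obstacle here. The only point needing care is bookkeeping: the closed neighborhood in $G^t$ must be taken with respect to the metric of $G^t$, not $G$, and the vertex $u$ itself must be included on both sides.

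The power graph was defined for connected $G$. If $G$ is disconnected, I would interpret $d_G(u,v)=\infty$ for vertices in different components; the same argument then goes through verbatim.
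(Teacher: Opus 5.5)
Your proof is correct and follows the paper's argument. Both reduce the claim to the vertex-wise equality $N_1[u]$ in $G^t$ equals $N_t[u]$ in $G$, so the two ideals have identical generating sets; your explicit treatment of the case $v=u$ is a slightly more careful version of the same bookkeeping.
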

\begin{proof}
We show that these two ideals have the same generators.    Suppose $\mathbf{x}_A$ is a generator of $\mathcal{N}_{G^t}^1$. Then there exists a vertex $v \in V(G)$ with $A = N_1[v]$ in $G^t$. By construction, $w \in N_1[v]$ in $G^t$ if and only if $d_G(v,w) \leq t$; that is, $N_1[v]$ in $G^t$ equals $N_t[v]$ in $G$. Hence $A = N_t[v]$, so $\mathbf{x}_A$ is a generator of $\mathcal{N}_G^t$. Hence, $\mathcal{N}_{G^t}^1 \subseteq \mathcal{N}_G^t$. The reverse inclusion follows by the same argument.
\end{proof}

\begin{lemma}\label{trivial2}
$D$ is a minimal $t$-dominating set of $G$ if and only if $D$ is a minimal dominating set of $G^t$.
\end{lemma}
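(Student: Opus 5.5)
The plan is to reduce the statement to the observation already used in the proof of Proposition~\ref{trivial1}: for every vertex $v\in V(G)$, the closed neighborhood $N_1[v]$ computed in $G^t$ coincides with the $t$-closed neighborhood $N_t[v]$ computed in $G$. This holds because $w\in N_1[v]$ in $G^t$ exactly when $w=v$ or $\{v,w\}\in E(G^t)$, that is, exactly when $d_G(v,w)\le t$. Taking unions, for every $U\subseteq V(G)$ the set $N_1[U]$ in $G^t$ equals the set $N_t[U]$ in $G$.

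Next I will show that the two domination notions agree on arbitrary subsets, not just minimal ones. By Definition~\ref{def:t-domin,t-indep}, $D$ is a $t$-dominating set of $G$ if and only if $N_t[D]=V(G)$. A dominating set of $G^t$ is, by definition, a $1$-dominating set of $G^t$, so $D$ dominates $G^t$ if and only if $N_1[D]=V(G^t)=V(G)$, with $N_1$ taken in $G^t$. By the first paragraph these conditions are identical. Hence the family of $t$-dominating sets of $G$ equals the family of dominating sets of $G^t$, as families of subsets of the common vertex set $V(G)=V(G^t)$.

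Finally, minimality is defined purely in terms of this family under inclusion: $D$ is minimal if it belongs to the family and no proper subset $D'\subsetneq D$ does. Since the two families coincide, their minimal elements coincide, which proves both directions at once.

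There is no real obstacle here. The only points needing care are that $N_1[v]$ in $G^t$ includes $v$ itself, which matches $d_G(v,v)=0\le t$, and that the definition of $G^t$ is stated for connected $G$. For a disconnected $G$, the same argument applies verbatim if we set $d_G(u,v)=\infty$ for vertices in different components, since such pairs are joined neither in $G^t$ nor within distance $t$ in $G$.
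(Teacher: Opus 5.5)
Your proof is correct and follows the paper's approach: both rest on the fact that $N_1[v]$ computed in $G^t$ equals $N_t[v]$ computed in $G$, so the $t$-dominating sets of $G$ are exactly the dominating sets of $G^t$. Your last step, that identical families have identical inclusion-minimal members, is a slightly cleaner form of the paper's redundant-vertex argument and gives both directions at once.
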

\begin{proof}
    By construction, domination in $G^t$ coincides with $t$-domination in $G$, so it remains to compare minimality. Suppose $D$ is not minimal in $G^t$, and let $u \in D$ be redundant, i.e., $D \setminus \{u\}$ still dominates $G^t$. Then $D \setminus \{u\}$ is also a $t$-dominating set of $G$, so $D$ is not a minimal $t$-dominating set of $G$. The converse is analogous.
\end{proof}

\begin{lemma}\label{trivial3}
    We have $P_{G^t}^1[D] = P_G^t[D]$.
\end{lemma}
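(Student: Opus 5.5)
The plan is to argue directly from the definitions, as in the proof of Proposition~\ref{trivial1}. The key observation, already used there, is that for every vertex $v\in V(G)$ the closed $1$-neighborhood of $v$ in $G^t$ equals the $t$-closed neighborhood of $v$ in $G$. Indeed, $w\in N_1[v]$ in $G^t$ means either $w=v$ or $\{v,w\}\in E(G^t)$, and by the definition of $E(G^t)$ this happens exactly when $d_G(v,w)\le t$, i.e.\ when $w\in N_t[v]$ computed in $G$. Throughout I will write $N^{G^t}_1[v]$ and $N^{G}_t[v]$ to keep the ambient graph explicit. I will also note that $V(G^t)=V(G)$, so both sides of the claimed equality are subsets of the same vertex set and the same $D$ makes sense in both graphs.

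With this identity in hand, I fix $D\subseteq V(G)$ and a vertex $v$. By Definition~\ref{def:t-private}, $v\in P^1_{G^t}[D]$ if and only if $|N^{G^t}_1[v]\cap D|=1$. Substituting $N^{G^t}_1[v]=N^G_t[v]$, this holds if and only if $|N^G_t[v]\cap D|=1$, which is exactly the condition $v\in P^t_G[D]$. Since this equivalence holds for each $v$, the two sets coincide.

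I would also remark that the refinement into self and external private neighbors is preserved, because membership of $v$ in $D$ does not depend on the ambient graph. Hence the self (resp.\ external) $1$-private neighbors of $D$ in $G^t$ are exactly the self (resp.\ external) $t$-private neighbors of $D$ in $G$. This will be convenient when the correspondence is combined with Lemma~\ref{trivial2} and Proposition~\ref{trivial1} to transfer results for closed neighborhood ideals to $t$-closed ones.

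There is no real obstacle here; the only point requiring care is the neighborhood identity $N^{G^t}_1[v]=N^G_t[v]$. In particular, one must check that the vertex $v$ itself is included on both sides: it lies in the closed neighborhood in $G^t$ by definition, and $d_G(v,v)=0\le t$.
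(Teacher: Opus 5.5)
Your proof is correct and takes the same approach as the paper. Both use the identity that $N_1[v]$ in $G^t$ equals $N_t[v]$ in $G$ to get $|N_1[v]\cap D|=|N_t[v]\cap D|$ for every $v$, and both note that the self/external split depends only on membership in $D$; the one small difference is that you check the neighborhood identity directly from the definition of $E(G^t)$, whereas the paper cites it from Proposition~\ref{trivial1}.
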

\begin{proof}
    By Proposition~\ref{trivial1}, $N_1[v]$ in $G^t$ equals $N_t[v]$ in $G$ for every $v \in V(G)$, and therefore $|N_1[v] \cap D| = |N_t[v] \cap D|$. Thus $v$ is a private neighbor of $D$ in $G^t$ if and only if it is a $t$-private neighbor of $D$ in $G$. Since the self/external distinction depends only on whether $v \in D$, it is preserved as well, and hence $P_{G^t}^1[D] = P_G^t[D]$.
\end{proof}

\begin{theorem}\label{P:vformula}
    Let $G$ be a simple, connected graph and let $t \geq 1$. Then
    \begin{equation*}
        \mathrm{v}(\mathcal{N}_G^t) = \min\bigl\{\, |N_t[U] \setminus D| \ : \ D \text{ is a minimal $t$-dominating set of $G$, } U \subseteq P_G^t[D] \text{ $t$-dominates } D \,\bigr\},
    \end{equation*}
    where $N_t[U] = \bigcup_{u \in U} N_t[u]$.
\end{theorem}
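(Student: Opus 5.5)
Via Proposition~\ref{trivial1}, Lemma~\ref{trivial2} and Lemma~\ref{trivial3}, the statement reduces to the case $t=1$ applied to the graph $H:=G^t$. The right-hand side then reads
\[
\min\bigl\{|N_1[U]\setminus D| : D \text{ minimal dominating set of } H,\ U\subseteq P^1_H[D] \text{ dominates } D\bigr\},
\]
where neighborhoods are taken in $H$. Since $\mathcal N^t_G=\mathcal N^1_H$ and $H$ is connected because $G$ is, it suffices to prove the formula for $\mathrm{v}(\mathcal N^1_H)$. If the characterization of the $\mathrm{v}$-number of closed neighborhood ideals in \cite{JLS_v_number_neighborhood} is exactly this statement, I would simply cite it. Otherwise I would prove it directly, as follows.

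First, I would describe $\Ass(\mathcal N^1_H)$. The ideal is square-free and generated by $\mathbf x_{N[v]}$, so its minimal primes are $\mathfrak p_D=\langle x_v: v\in D\rangle$ with $D$ a minimal transversal of the family $\{N[v]\}$. A set $D$ meets every $N[v]$ exactly when it is dominating, so these primes correspond to minimal dominating sets. Next, I would use the standard fact for square-free monomial ideals $I$: $\mathrm{v}(I)$ is attained by a monomial $f$, and it can be taken square-free. The relevant criterion is $(I:f)=\mathfrak p_D$ iff $f\notin I$, $f\notin \mathfrak p_D$ (equivalently $\operatorname{supp} f\cap D=\emptyset$), and $x_u f\in I$ for every $u\in D$.

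For a square-free $f=\mathbf x_A$ with $A\cap D=\emptyset$, the condition $x_uf\in I$ means there is a vertex $w_u$ with $N[w_u]\subseteq A\cup\{u\}$. Because $A\cap D=\emptyset$ and $D$ dominates, $N[w_u]$ must contain $u$, and $u$ is its only vertex in $D$. So $w_u$ is a private neighbor of $D$ and $w_u\in N[u]$. Collecting these gives $U=\{w_u\}\subseteq P^1_H[D]$ that dominates $D$, with $N[U]\setminus D\subseteq A$. Hence $\deg f\ge |N[U]\setminus D|$, which proves ``$\ge$''.

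For ``$\le$'', take $D$ and $U$ achieving the minimum and set $A=N[U]\setminus D$, $f=\mathbf x_A$. Then $A\cap D=\emptyset$. For each $u\in D$, choose $w\in U$ dominating $u$. Since $w$ is private, $N[w]\cap D=\{u\}$, so $N[w]\subseteq A\cup\{u\}$ and $x_uf\in I$. Also $f\notin I$: if $N[v]\subseteq A$, then $N[v]\cap D=\emptyset$, contradicting domination. Thus $(I:f)\supseteq \mathfrak p_D$. Conversely, if $g$ is a monomial with $gf\in I$, then some $N[v]\subseteq \operatorname{supp}(g)\cup A$. This $N[v]$ meets $D$, so $\operatorname{supp} g$ meets $D$, giving $g\in\mathfrak p_D$. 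Hence $(I:f)=\mathfrak p_D$.

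The main obstacle I expect is the reduction to square-free monomial $f$. This step needs the known lemma that for monomial ideals the minimum in $\mathrm{v}(I)$ is realized by a monomial, and for radical $I$ that one may drop repeated variables. I would cite \cite{vnumberedge} for this rather than reprove it. I would also check that $\mathfrak p_D\supseteq$ each colon computed uses only monomial $g$, which is fine because colons of monomial ideals by monomials are monomial ideals.
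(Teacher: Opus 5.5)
Your proposal is correct and follows the paper's route: you reduce to $t=1$ for $H=G^t$ via Proposition~\ref{trivial1} and Lemmas~\ref{trivial2} and~\ref{trivial3}, and then invoke \cite[Thm.~3.6]{JLS_v_number_neighborhood}, which is exactly the $t=1$ formula the paper cites. Your fallback direct argument for $t=1$, using square-free monomial witnesses and the colon criterion for $\mathfrak p_D$, is also sound and simply reproves that cited theorem; its one unstated point, that $(I:f)=\mathfrak p_D$ forces $f\notin\mathfrak p_D$, follows because $I$ is radical.
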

\begin{proof}
By~\cite[Thm.~3.6]{JLS_v_number_neighborhood}, for every simple connected graph $H$ we have
    \begin{equation*}
        \mathrm{v}(\mathcal{N}_{H}^1) = \min\bigl\{\, |N_1[U] \setminus D| \ : \ D \text{ is a minimal dominating set of $H$ and } U \subseteq P_{H}^1[D] \text{ dominates } D \,\bigr\}.
    \end{equation*}
    We apply this to $H = G^t$. By Proposition~\ref{trivial1}, $\mathrm{v}(\mathcal{N}_G^t) = \mathrm{v}(\mathcal{N}_{G^t}^1)$, and $N_1[U]$ in $G^t$ equals $N_t[U]$ in $G$ for every $U \subseteq V(G)$. By Lemma~\ref{trivial2}, the minimal dominating sets of $G^t$ are exactly the minimal $t$-dominating sets of $G$; by Lemma~\ref{trivial3}, $P_{G^t}^1[D] = P_G^t[D]$; and $U$ dominates $D$ in $G^t$ if and only if $U$ $t$-dominates $D$ in $G$. Substituting these identifications yields the stated formula.
\end{proof}

Using this result, we relate the $\mathrm{v}$-number to the invariants of the graph $G$ associated to eccentricity, such as the radius and the diameter of $G$.

\begin{lemma}\label{smallestball}
    Let $G$ be a simple connected graph. We have the following lower bound for the $\mathrm{v}$-number of the $t$-closed neighborhood ideal of $G$,
    \begin{equation*}
        \mathrm{v}(\mathcal{N}_G^t) \geq \delta_t(G) - 1.
    \end{equation*}
\end{lemma}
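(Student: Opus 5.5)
We derive the bound directly from the combinatorial formula of Theorem~\ref{P:vformula}. Take a minimal $t$-dominating set $D$ of $G$ and a set $U \subseteq P_G^t[D]$ that $t$-dominates $D$ and realizes the minimum, so that $\mathrm{v}(\mathcal{N}_G^t) = |N_t[U]\setminus D|$. It then suffices to find one vertex $u \in U$ with
\[
|N_t[u] \setminus D| \geq |N_t[u]| - 1,
\]
because then
\[
|N_t[U]\setminus D| \geq |N_t[u]\setminus D| \geq |N_t[u]|-1 \geq \delta_t(G)-1 .
\]

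Producing such a $u$ rests on the defining property of $t$-private neighbors. Every $u \in U$ lies in $P_G^t[D]$, so by Definition~\ref{def:t-private} we have $|N_t[u]\cap D| = 1$. Hence
\[
N_t[u]\setminus D = N_t[u] \setminus \bigl(N_t[u]\cap D\bigr)
\]
has exactly $|N_t[u]|-1$ elements, which is the inequality we need.

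We also need $U$ to be nonempty. Since $G$ is a (nonempty) connected graph, any $t$-dominating set $D$ is nonempty, and $U$ must $t$-dominate $D$, so $U \neq \emptyset$. We pick any $u \in U$ and apply the inequality above. Monotonicity $N_t[u]\setminus D \subseteq N_t[U]\setminus D$ is immediate from $N_t[U]=\bigcup_{u\in U}N_t[u]$.

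No step here looks like a real obstacle; the argument is a direct consequence of Theorem~\ref{P:vformula}. The only points to check are that the minimum in Theorem~\ref{P:vformula} is over a nonempty family, so that a minimizing pair $(D,U)$ exists, and that $U \neq \emptyset$. The family is nonempty because $D$ with $U=D$ is admissible: every element of a minimal dominating set in $G^t$ either is a self private neighbor or has an external one, and that suffices by the cited result. In any case the lower bound holds for every admissible pair, so it holds for the minimum.
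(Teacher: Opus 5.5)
Your proof is correct and follows the paper's argument: privacy of any $u\in U$ gives $|N_t[u]\cap D|=1$, hence $|N_t[U]\setminus D|\geq |N_t[u]|-1\geq \delta_t(G)-1$ for every admissible pair $(D,U)$. One harmless slip in your side remark is that $U=D$ is admissible only when $D$ is $t$-independent (e.g.\ a maximal $t$-independent set), not for an arbitrary minimal $t$-dominating set; the bound does not depend on this, since it holds for every admissible pair.
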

\begin{proof}
    We use the combinatorial description of the $\mathrm{v}$-number from Theorem~\ref{P:vformula}. Assume $U \subseteq P_G^t[D]$ $t$-dominates $D$. For any $u \in U$ we have $|N_t[u] \cap D| = 1$; hence it must be that $|N_t[u] \setminus D|  = |N_t[u]| - |N_t[u] \cap D| \geq \delta_t(G) - 1$, since $|N_t[u]|\geq  \delta_t(G)$ and $|N_t[u] \cap D|= 1$ ($u$ is a $t$-private neighbor). Consequently, $|N_t[U] \setminus D| \geq |N_t[u] \setminus D| \geq \delta_t(G) - 1$. Since $U$ and $D$ were arbitrary, we conclude that the lower bound holds for all such sets, and hence $\mathrm{v}(\mathcal{N}_G^t) \geq \delta_t(G) - 1$. 
\end{proof}

\begin{proposition}\label{prop:v-number-radius}
    Let $G$ be a simple connected graph. We have the following results about the behavior of $\mathrm{v}(\mathcal{N}_G^t)$.
    \begin{itemize}
        \item If ${\rm rad}(G) \leq t $ we have $\mathrm{v}(\mathcal{N}_G^t) =\delta_t(G) -1 $.
        \item If $1 \leq t < {\rm rad}(G)$ we have $t \leq \mathrm{v}(\mathcal{N}_G^t) \leq n - \alpha_t(G)$ where $\alpha_t(G)$ is the $t$-independence number of $G$. 
    \end{itemize}
    In particular, we see that the sequence $\mathrm{v}(\mathcal{N}_G^t)$ is monotonic non-decreasing when $ t \geq {\rm rad}(G)$ and stabilizes at $n-1$.
\end{proposition}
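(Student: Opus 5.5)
The plan is to handle each bullet through the combinatorial formula of Theorem~\ref{P:vformula}. In each case we exhibit an explicit pair $(D,U)$ for the upper bound. For the lower bound we use Lemma~\ref{smallestball} or a direct path argument.

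\emph{Case ${\rm rad}(G)\le t$.} The lower bound $\mathrm{v}(\mathcal{N}_G^t)\ge \delta_t(G)-1$ is exactly Lemma~\ref{smallestball}. For the upper bound, choose a center $c$ with $\epsilon(c)={\rm rad}(G)\le t$, so that $N_t[c]=V(G)$.
\begin{itemize}
\item $D=\{c\}$ is a $t$-dominating set, and it is trivially minimal because it is a singleton.
\item Every vertex $u$ satisfies $d_G(u,c)\le t$, so $N_t[u]\cap D=\{c\}$. Hence $P_G^t[D]=V(G)$.
\item Pick $v$ with $|N_t[v]|=\delta_t(G)$ and set $U=\{v\}$. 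Since $c\in N_t[v]$, the set $U$ $t$-dominates $D$, and $|N_t[U]\setminus D|=\delta_t(G)-1$.
\end{itemize}
Together with the lower bound, this gives equality.

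\emph{Case $t<{\rm rad}(G)$, upper bound.} Take a $t$-independent set $D$ with $|D|=\alpha_t(G)$.
\begin{itemize}
\item $D$ is $t$-dominating. Otherwise some vertex lies at distance greater than $t$ from all of $D$, and adding it to $D$ would give a larger $t$-independent set.
\item By $t$-independence, every $d\in D$ has $N_t[d]\cap D=\{d\}$, so every vertex of $D$ is a self $t$-private neighbor.
\item It follows that $D$ is a minimal $t$-dominating set: removing any $d$ leaves $d$ itself undominated.
\end{itemize}
Now take $U=D\subseteq P_G^t[D]$, which $t$-dominates $D$. Since $D$ is $t$-dominating, $N_t[U]=V(G)$, so $|N_t[U]\setminus D|=n-\alpha_t(G)$.

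\emph{Case $t<{\rm rad}(G)$, lower bound.} This is the step needing the most care. Let $(D,U)$ be any admissible pair and pick $u\in U$. Because $\epsilon(u)\ge {\rm rad}(G)>t$ and $G$ is connected, some vertex $w$ satisfies $d_G(u,w)=t+1$. A shortest $u$–$w$ path contains vertices at distances $0,1,\dots,t$ from $u$, giving $t+1$ distinct vertices in $N_t[u]$. Since $u$ is a $t$-private neighbor of $D$, at most one of these vertices lies in $D$. Therefore
\[
|N_t[U]\setminus D|\ \ge\ |N_t[u]\setminus D|\ \ge\ t,
\]
and minimizing over all admissible pairs gives $\mathrm{v}(\mathcal{N}_G^t)\ge t$.

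\emph{The final remark.} For $t\ge {\rm rad}(G)$, the first bullet gives $\mathrm{v}(\mathcal{N}_G^t)=\delta_t(G)-1$. Balls grow with $t$, since $N_t[v]\subseteq N_{t+1}[v]$, so $\delta_t(G)$ is non-decreasing in $t$. Once $t\ge {\rm diam}(G)$, every ball is all of $V(G)$, so $\delta_t(G)=n$ and the sequence stabilizes at $n-1$.
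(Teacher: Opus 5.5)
Your proof is correct and follows essentially the same route as the paper. For the first bullet you use the center $D=\{c\}$ with $U=\{v\}$ of minimum ball size, together with Lemma~\ref{smallestball}; for the second you use a maximum $t$-independent set with $U=D$, together with the shortest-path count $|N_t[u]|\ge t+1$. The differences are cosmetic: you inline the argument of Lemma~\ref{smallestball} and prove directly that a maximal $t$-independent set is a minimal $t$-dominating set, which the paper cites from Chartrand. You also explicitly justify the final monotonicity and stabilization claim via $N_t[v]\subseteq N_{t+1}[v]$, which the paper leaves implicit.
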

\begin{proof}
    Let ${\rm rad}(G) \leq t$. There exists $u \in V(G)$ such that $N_t[u] = V(G)$, namely at least $u$ such that  $\epsilon(u)={\rm rad}(G)$. Then $D= \{u\}$ is a minimal $t$-dominating set and any of its $t$-neighbors are external $t$-private neighbors. Pick $v \in N_t[u] = V(G)$ with $|N_t[v]| = \delta_t(G)$. Clearly $U = \{v\}$ $t$-dominates $D$ and $ |N_t[U] \setminus D|= |N_t[v]|-1 = \delta_t(G) - 1$. Thus we conclude $\mathrm{v}(\mathcal{N}_G^t) \leq \delta_t(G) - 1$. Equality follows by Lemma \ref{smallestball}. 

    For the second part, note that since $t < {\rm rad}(G)$, for any $u \in V(G)$ there exists $v \in V(G)$ such that $d_G(u,v) = t+1$. Taking the vertices along that path, we see that we have $|N_t[u]| \geq t + 1$. It follows that $\delta_t(G) \geq t + 1$ and hence $\mathrm{v}(\mathcal{N}^t_G) \geq t$ by Lemma \ref{smallestball}. For the upper bound, let $D$ be a maximum $t$-independent set of $G$, so that $|D|=\alpha_t(G)$. Then $D$ is a maximal independent set of $G^t$, and hence a minimal dominating set of $G^t$ by \cite[Cor.~10.37]{chartrand_graphs_digraphs}. By Lemma~\ref{trivial2}, $D$ is a minimal $t$-dominating set of $G$. For $U=D$, we obtain $|N_t[U] \setminus D| \leq |V(G) \setminus D|= n - \alpha_t(G)$. 
\end{proof}

As a corollary, we obtain the $\mathrm{v}$-number for complete $r$-partite graphs, which, for $t=1$, recovers~\cite[Ex.~3.8]{JLS_v_number_neighborhood}, where all parts are assumed to have at least two vertices.

\begin{corollary}
  Let $G=K_{n_1, \dots, n_r}$ be a complete $r$-partite graph with $n_1 \geq n_2 \geq \dots \geq n_r$ and $r\geq 2$. Then $\mathrm{v}(\mathcal{N}_G^1)= \sum_{i=2}^r n_i$ while $\mathrm{v}(\mathcal{N}^t_G)= n -1$ for $t \geq 2$. 
  
  \end{corollary}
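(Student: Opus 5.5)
The plan is to handle the two regimes with the tools already available: Lemma~\ref{smallestball} for the lower bound and an explicit pair $(D,U)$ in the formula of Theorem~\ref{P:vformula} for the matching upper bound. Write $V_1,\dots,V_r$ for the parts, with $|V_i|=n_i$ and $n=\sum_i n_i$. A vertex $v\in V_i$ is adjacent exactly to the vertices of $V(G)\setminus V_i$. Hence
\[
N_1[v]=\{v\}\cup (V(G)\setminus V_i), \qquad |N_1[v]|=n-n_i+1 .
\]

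\emph{Case $t\geq 2$.} Take two distinct vertices $u,v$. If they lie in different parts, they are adjacent. If both lie in $V_i$, they have a common neighbor in any other part, which exists since $r\geq 2$. So ${\rm diam}(G)\leq 2\leq t$, and therefore ${\rm rad}(G)\leq t$. Moreover $N_t[v]=V(G)$ for every $v$, so $\delta_t(G)=n$. The first item of Proposition~\ref{prop:v-number-radius} then gives $\mathrm{v}(\mathcal{N}_G^t)=\delta_t(G)-1=n-1$.

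\emph{Case $t=1$: lower bound.} By the description of closed neighborhoods above,
\[
\delta_1(G)=\min_i\,(n-n_i+1)=n-n_1+1 .
\]
Lemma~\ref{smallestball} then yields $\mathrm{v}(\mathcal{N}_G^1)\geq n-n_1=\sum_{i=2}^r n_i$.

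\emph{Case $t=1$: upper bound.} Take $D=V_1$ and $U=V_1$, and verify three things.
\begin{itemize}
\item $D$ dominates $G$. Each vertex of $V_1$ is in $D$, and each vertex outside $V_1$ is adjacent to every vertex of $V_1$.
\item $D$ is minimal. Removing any $w\in V_1$ leaves $w$ undominated, because $w$ has no neighbors inside $V_1$. If $n_1=1$, then $D$ is a single dominating vertex, which is trivially minimal.
\item $U\subseteq P_G^1[D]$ and $U$ dominates $D$. For $w\in V_1$ we have $N_1[w]\cap D=\{w\}$, so every vertex of $V_1$ is a self private neighbor. Also $U=D$ dominates $D$.
\end{itemize}
Then $N_1[U]\subseteq V(G)$, so $|N_1[U]\setminus D|\leq n-n_1$. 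Theorem~\ref{P:vformula} therefore gives $\mathrm{v}(\mathcal{N}_G^1)\leq n-n_1$, and combined with the lower bound this is an equality.

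There is no real obstacle here. The only points needing care are the minimality of $D=V_1$, including the degenerate case $n_1=1$, and checking that the vertices of $U$ are genuinely private. In particular one must not use a pair $\{a,b\}$ from two different parts as $D$: there $a$ is adjacent to $b$, so $a$ is not a private neighbor of $\{a,b\}$. Using the lemma for the lower bound avoids enumerating all minimal dominating sets.
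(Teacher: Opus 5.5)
Your proof is correct and follows the paper's argument: for $t=1$ you use Lemma~\ref{smallestball} with $\delta_1(G)=n-n_1+1$ for the lower bound, and $D=U=V_1$ in Theorem~\ref{P:vformula} for the upper bound. For $t\geq 2$ the paper simply observes that $G^t$ is complete, which is the same fact you derive from ${\rm diam}(G)\leq 2$ before invoking Proposition~\ref{prop:v-number-radius}.
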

  \begin{proof}
      If $t=1$, note that $\delta_1(K_{n_1, \dots, n_r}) = \sum_{i=2}^r n_i +1 =n-n_1+1$ and hence by Lemma \ref{smallestball} we have $\mathrm{v}(\mathcal{N}_{K_{n_1, \dots, n_r}}^1) \geq  \sum_{i=2}^r n_i $. Taking $D=U=V_1$, with $|V_1|=n_1$ (a largest part of the $r$-partite graph) gives the reverse inequality via Theorem \ref{P:vformula}.
      
      If $t \geq 2$, we have $\mathrm{v}(\mathcal{N}^t_{K_{n_1, \dots, n_r}}) = n-1$ since $K_{n_1, \dots, n_r}^t$ is a complete graph for $ t \geq 2$.
\end{proof}

For a nonzero homogeneous ideal $I\subseteq S$, its
\emph{initial degree} is
\[
    \operatorname{indeg}(I)
    := \min\{d\geq 0 : I_d\neq 0\},
\]
where $I_d$ denotes the degree-$d$ homogeneous component of $I$. Equivalently, $\operatorname{indeg}(I)$ is the smallest degree among the minimal homogeneous generators of $I$. For a nonzero proper square-free monomial ideal $I\subseteq S=\mathbb{K}[x_1,\ldots,x_n]$, with square-free monomial generators $m_1,\ldots,m_r$, its \emph{Alexander dual} is the square-free monomial ideal
\[
    I^\vee
    := \bigcap_{j=1}^r
       \langle x_i : x_i \mid m_j\rangle.
\]
Note that redundant generators $m_j$ contribute redundant ideals to the intersection, and thus we can assume $m_1,\dots,m_r$ are minimal generators. The following result connects these concepts for $t$-closed neighborhood ideals with graph invariants. 

\begin{proposition}\label{prop:radius-diameter-detection}
Let $G$ be a connected graph on $n\ge2$ vertices. Then
\[
\operatorname{rad}(G)
=
\min\left\{
t\ge1:
\operatorname{indeg}\big((\mathcal{N}_G^t)^\vee\big)=1
\right\},
\]
while
\[
\operatorname{diam}(G)
=
\min\left\{
t\ge1:
\mathrm{v}(\mathcal{N}_G^t)=n-1
\right\}.
\]
\end{proposition}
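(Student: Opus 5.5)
Both identities are proved by unwinding definitions, using Theorem~\ref{P:vformula} and Proposition~\ref{prop:v-number-radius}.

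\emph{The radius identity.} The Alexander dual $(\mathcal{N}_G^t)^\vee$ is the intersection of the prime ideals $\langle x_i : v_i\in N_t[u]\rangle$ over $u\in V(G)$. A monomial lies in this intersection exactly when its support meets every $N_t[u]$. So a variable $x_i$ lies in $(\mathcal{N}_G^t)^\vee$ iff $v_i\in N_t[u]$ for all $u$, i.e. iff $N_t[v_i]=V(G)$, i.e. iff $\epsilon(v_i)\le t$. The dual is nonzero and proper, so its initial degree is at least $1$. Hence $\operatorname{indeg}\big((\mathcal{N}_G^t)^\vee\big)=1$ iff some vertex has eccentricity at most $t$, iff $t\ge \operatorname{rad}(G)$. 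Since $n\ge 2$ and $G$ is connected, $\operatorname{rad}(G)\ge1$, so the minimum over $t\ge1$ equals $\operatorname{rad}(G)$.

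\emph{The diameter identity.} First, if $t\ge\operatorname{diam}(G)$, then $t\ge\operatorname{rad}(G)$ and every ball satisfies $N_t[v]=V(G)$. So $\delta_t(G)=n$, and the first item of Proposition~\ref{prop:v-number-radius} gives $\mathrm{v}(\mathcal{N}_G^t)=n-1$.

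It remains to show $\mathrm{v}(\mathcal{N}_G^t)<n-1$ whenever $1\le t<\operatorname{diam}(G)$, which I expect to be the main step. For this I will exhibit a pair $(D,U)$ as in Theorem~\ref{P:vformula} with $|N_t[U]\setminus D|\le n-2$. Pick vertices $a,b$ with $d_G(a,b)>t$, so that $N_t[a]\neq V(G)$.

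Take a minimal $t$-dominating set $D$ and a set $U\subseteq P_G^t[D]$ that $t$-dominates $D$. If $|D|\ge2$, then $D\cap N_t[U]\ne\emptyset$ already removes at least two vertices of $D$, so $|N_t[U]\setminus D|\le n-|D|\le n-2$. The remaining worry is when every minimal $t$-dominating set is forced to be a singleton, but this cannot happen here. Starting from a $t$-dominating set containing $a$ and $b$, such as $V(G)$, shrink to a minimal one while keeping $a$ and $b$. This works for the following reason.
\begin{itemize}
\item Since $d_G(a,b)>t$, the set $\{a,b\}$ is $t$-independent.
\item Extend $\{a,b\}$ to a maximal $t$-independent set $D$, i.e. a maximal independent set of $G^t$.
\item By \cite[Cor.~10.37]{chartrand_graphs_digraphs} and Lemma~\ref{trivial2}, $D$ is a minimal $t$-dominating set.
\item Taking $U=D$, each vertex of $D$ is a self $t$-private neighbor by $t$-independence, and $U$ $t$-dominates $D$.
\end{itemize}
For this pair, $|N_t[D]\setminus D|=n-|D|\le n-2$. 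Hence $\mathrm{v}(\mathcal{N}_G^t)\le n-2$ for all $t<\operatorname{diam}(G)$, which combined with the first case gives the second identity. This mirrors the upper bound $n-\alpha_t(G)$ of Proposition~\ref{prop:v-number-radius}, observing that $\alpha_t(G)\ge2$ exactly when $t<\operatorname{diam}(G)$, including the range $\operatorname{rad}(G)\le t<\operatorname{diam}(G)$ not covered there.
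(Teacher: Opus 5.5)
Your proof is correct and follows essentially the same route as the paper's. For the radius, you detect degree-one elements of the Alexander dual directly, where the paper shows $\operatorname{indeg}\big((\mathcal{N}_G^t)^\vee\big)=\gamma_t(G)$. For the diameter, you extend a pair at distance $>t$ to a maximal $t$-independent set $D$ and take $U=D$, exactly as the paper does; your intermediate remarks about ``shrinking while keeping $a$ and $b$'' are superfluous, since the maximal-independent-set construction already does all the work.
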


\begin{proof}
For $D\subseteq V(G)$, let $\mathbf x_D=\prod_{v_i\in D}x_i$. Since
\[
(\mathcal{N}_G^t)^\vee
=
\bigcap_{v\in V(G)}
(x_i:v_i\in N_t[v]),
\]
we have
\[
\mathbf x_D\in(\mathcal{N}_G^t)^\vee
\Longleftrightarrow
D\cap N_t[v]\neq\emptyset
\text{ for every }v\in V(G).
\]
Thus the minimal generators of $(\mathcal{N}_G^t)^\vee$ correspond to
the minimal $t$-dominating sets of $G$, and hence
\[
\operatorname{indeg}\big((\mathcal{N}_G^t)^\vee\big)=\gamma_t(G).
\]
Therefore,
\[
\operatorname{indeg}\big((\mathcal{N}_G^t)^\vee\big)=1
\Longleftrightarrow
N_t[u]=V(G)\text{ for some }u
\Longleftrightarrow
\operatorname{rad}(G)\le t.
\]
This proves the first equality.

Now suppose $t\ge\operatorname{diam}(G)$. Then
$N_t[u]=V(G)$ for every $u\in V(G)$, so
$
\mathcal{N}_G^t=(x_1\cdots x_n)
$
and consequently
$
\mathrm{v}(\mathcal{N}_G^t)=n-1.
$

Conversely, suppose $t<\operatorname{diam}(G)$. Choose vertices
$u,v$ with $d_G(u,v)>t$ and extend $\{u,v\}$ to a maximal
$t$-independent set $D$. Then $|D|\ge2$, and maximality implies that
$D$ is $t$-dominating (recall \cite[Cor.~10.37]{chartrand_graphs_digraphs}); since $D$ is $t$-independent, it is a minimal $t$-dominating set. Moreover, for $d\in D$, we have 
\(
N_t[d]\cap D=\{d\},
\)
so every element of $D$ is a self $t$-private neighbor of $D$.
Taking $U=D$ in Theorem~\ref{P:vformula} gives
\[
\mathrm{v}(\mathcal{N}_G^t)
\le
|N_t[D]\setminus D|
=
n-|D|
\le n-2.
\]
Hence $\mathrm{v}(\mathcal{N}_G^t)=n-1$ if and only if
$t\ge\operatorname{diam}(G)$, proving the second equality.
\end{proof}

\section{Relation between the $\mathrm{v}$-number and the Castelnuovo--Mumford regularity}\label{S:vVSreg}
In this section, we establish cases of the inequality
\(
\mathrm{v}(\mathcal{N}_G^t)\leq\regt(S/\mathcal{N}_G^t),
\) for $t\geq 1$.
For $t=1$, this is the inequality conjectured in~\cite[Conjecture~4.9]{JLS_v_number_neighborhood}. By Proposition~\ref{trivial1}, $\mathcal{N}_G^t=\mathcal{N}_{G^t}^1$, so results for $t=1$ also apply to general $t$ by replacing $G$ with $G^t$, provided that $G^t$ satisfies the corresponding hypotheses. Throughout, $G$ is a simple connected graph with vertex set $V=\{v_1,\ldots,v_n\}$, where $n\geq2$, and $S=\mathbb K[x_1,\ldots,x_n]$ for a field $\mathbb K$.

The \emph{dominance complex} of $G$ is the simplicial complex
\[
\Delta=\Dom(G)
=\{F\subseteq V:V\setminus F\text{ dominates }G\}.
\]
Its elements are called \emph{faces}, and its inclusion-maximal
faces are called \emph{facets}. For $F\subseteq V$, let
$\mathbf{x}_F=\prod_{v_i\in F}x_i$. The Stanley--Reisner ideal
and Stanley--Reisner ring of $\Delta$ are, respectively,
\[
I_\Delta
=\langle\mathbf{x}_F:F\subseteq V,\ F\notin\Delta\rangle,
\qquad
\mathbb K[\Delta]=S/I_\Delta;
\]
see~\cite[Section~1.1]{MillerSturmfels2005}.
Since $F\notin\Delta$ if and only if $N_1[u]\subseteq F$
for some $u\in V$, we have
\[
I_\Delta=\mathcal{N}_G^1,
\qquad
\mathbb K[\Delta]=S/\mathcal{N}_G^1.
\]
The inequality $\operatorname{depth}(S/\mathcal{N}_G^1)\leq n-\Gamma(G)$ in the next lemma is equivalent to $\Gamma(G)\leq\mathtt{pd}(S/\mathcal{N}_G^1)$, which was proved in~\cite[Proposition~3.5]{moradi2026homological} (see also~\cite[Theorem~11]{CivanSingh2026}); we include a short proof for completeness.

The inequality $\operatorname{depth}(S/\mathcal{N}_G^1)\leq n-\Gamma(G)$ in the next lemma is equivalent to $\Gamma(G)\leq\mathtt{pd}(S/\mathcal{N}_G^1)$, which was proved in~\cite[Proposition~3.5]{moradi2026homological} (see also~\cite[Theorem~11]{CivanSingh2026}); we include a short proof for completeness.

\begin{lemma}\label{lem:v-upper-bound}
Let $R=S/\mathcal{N}_G^1$. Then
\[
\mathrm{v}(\mathcal{N}_G^1)\leq n-\Gamma(G)
\text{ and }
\operatorname{depth} R\leq n-\Gamma(G)\leq\dim R
=n-\gamma_1(G).
\]
\end{lemma}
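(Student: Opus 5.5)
Take $D$ to be a minimal dominating set of $G$ with $|D|=\Gamma(G)$. The whole proof rests on this one choice.

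\emph{The $\mathrm{v}$-number bound.} Every vertex $d\in D$ has a private neighbor $p_d\in P_G^1[D]$ with $N_1[p_d]\cap D=\{d\}$, since $D$ is minimal; possibly $p_d=d$. Set $U=\{p_d:d\in D\}$. Then $U\subseteq P_G^1[D]$ and $U$ dominates $D$, because $d\in N_1[p_d]$. Theorem~\ref{P:vformula} with $t=1$ then gives
\[
\mathrm{v}(\mathcal{N}_G^1)\le |N_1[U]\setminus D|\le |V\setminus D| = n-\Gamma(G).
\]

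\emph{The dimension.} The minimal primes of the Stanley--Reisner ideal $I_\Delta=\mathcal{N}_G^1$ are $\langle x_i: v_i\in D'\rangle$ with $D'$ a minimal dominating set. This follows from the Alexander dual computation in the proof of Proposition~\ref{prop:radius-diameter-detection}: a prime $\langle x_i:v_i\in D'\rangle$ contains $\mathcal{N}_G^1$ if and only if $D'$ meets every $N_1[v]$. Hence the facets of $\Delta$ are the complements $V\setminus D'$, and
\[
\dim R=\max_{D'}(n-|D'|)=n-\gamma_1(G),
\]
with the maximum over minimal dominating sets $D'$. Since $\gamma_1(G)\le\Gamma(G)$, we get $n-\Gamma(G)\le n-\gamma_1(G)=\dim R$.

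\emph{The depth bound (main step).} Use the standard fact that $\operatorname{depth} R\le \dim(S/\mathfrak p)$ for every $\mathfrak p\in\Ass(R)$ (Bruns--Herzog, Proposition~1.2.13). Take $\mathfrak p=\langle x_i:v_i\in D\rangle$ for the chosen $D$. This prime is a minimal prime of $\mathcal{N}_G^1$, hence associated, and $\dim S/\mathfrak p=n-|D|=n-\Gamma(G)$. This yields $\operatorname{depth} R\le n-\Gamma(G)$.

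The point needing care is that this bound uses a \emph{non-maximal} facet $V\setminus D$, since $\Delta$ need not be pure. One must check the cited fact applies to all associated primes and not only to those of maximal dimension; it does. Should a self-contained argument be preferred, the alternative is to localize at $\mathfrak p$, or to take the link of the facet $V\setminus D$ and use Hochster's formula. The face $F=V\setminus D$ has empty link, which gives nonvanishing local cohomology $H^{|F|}_{\mathfrak m}(R)\neq0$, so $\operatorname{depth}R\le|F|=n-\Gamma(G)$.
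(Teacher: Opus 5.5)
Your proposal is correct and follows the paper's proof step for step. You make the same choice of a minimal dominating set $D$ with $|D|=\Gamma(G)$ and feed its private neighbors $U$ into Theorem~\ref{P:vformula}; you identify the minimal (hence associated) primes of $\mathcal{N}_G^1$ with minimal dominating sets to get $\dim R=n-\gamma_1(G)$; and you apply Bruns--Herzog, Proposition~1.2.13, at the prime $\mathfrak p_D$ for the depth bound. Your closing remark via Hochster's local cohomology formula is a valid self-contained alternative for the depth step, but it is not needed: the facet $V\setminus D$ has link $\{\emptyset\}$, which forces $H^{n-\Gamma(G)}_{\mathfrak m}(R)\neq 0$.
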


\begin{proof}
Choose a minimal dominating set $D$ with $|D|=\Gamma(G)$.
For each $d\in D$, minimality gives a vertex $u_d$ such that
$N_1[u_d]\cap D=\{d\}$. Thus $U=\{u_d:d\in D\}$ is contained in
$P_G^1[D]$ and dominates $D$. By Theorem~\ref{P:vformula},
\[
\mathrm{v}(\mathcal{N}_G^1)
\leq |N_1[U]\setminus D|
\leq |V\setminus D|=n-\Gamma(G).
\]
For $D\subseteq V$, set
$\mathfrak p_D=\langle x_i:v_i\in D\rangle$.
The facets of $\Delta$ are precisely the complements of
the minimal dominating sets of $G$. Thus the Stanley--Reisner
decomposition~\cite[Theorem~1.7]{MillerSturmfels2005},
restricted to facets, gives the irredundant prime decomposition
\[
\mathcal{N}_G^1=I_\Delta
=\bigcap_{F\text{ facet of }\Delta}\mathfrak p_{V\setminus F}
=\bigcap_{D\text{ minimal dominating}}\mathfrak p_D.
\]
These primes are precisely the associated primes of $R$.
Hence
\[
\dim R
=\max_{D\text{ minimal dominating}}\dim(S/\mathfrak p_D)
=n-\gamma_1(G).
\]
The depth bound for associated primes
\cite[Proposition~1.2.13]{BrunsHerzog1998} gives
\[
\operatorname{depth} R
\leq\min_{D\text{ minimal dominating}}\dim(S/\mathfrak p_D)
=n-\Gamma(G).
\]
Since $\gamma_1(G)\leq\Gamma(G)$, this proves the remaining
inequalities.
\end{proof}

We next use reduced simplicial homology to compare depth and
regularity. Write $\widetilde H_j(\Delta;\mathbb K)$ for the
$j$-th reduced simplicial homology of $\Delta$ over $\mathbb K$.
Over a field, reduced homology and cohomology have the same
dimensions; see~\cite[Section~1.3]{MillerSturmfels2005}.
For $W\subseteq V$, write
$\Delta[W]=\{F\in\Delta:F\subseteq W\}$ for the restriction
of $\Delta$ to $W$.

\begin{lemma}\label{lem:full-support-homology}
For $R=S/\mathcal{N}_G^1$, there is an integer $i$ such that
$b_{i,n}(R)\neq0$. Consequently,
\[
\operatorname{depth} R\leq\regt(R)\leq\dim R.
\]
\end{lemma}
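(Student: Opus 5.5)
The plan is to use Hochster's formula for the Stanley--Reisner ring $R=\mathbb K[\Delta]=S/\mathcal{N}_G^1$, with $\Delta=\Dom(G)$. It says
\[
b_{i,j}(R)=\sum_{W\subseteq V,\ |W|=j}\dim_{\mathbb K}\widetilde H_{j-i-1}(\Delta[W];\mathbb K).
\]
For $j=n$ the only admissible support is $W=V$. Hence $b_{i,n}(R)=\dim\widetilde H_{n-i-1}(\Delta;\mathbb K)$, and the first claim reduces to showing that $\Delta$ has nonzero reduced homology in some degree.

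To get nonvanishing homology I will compute the reduced Euler characteristic. It is independent of the field and equals the alternating sum of the dimensions of reduced homology, so it suffices to show it is nonzero:
\[
\widetilde\chi(\Delta)=\sum_{F\in\Delta}(-1)^{|F|-1}
=\sum_{D\text{ dominating}}(-1)^{n-|D|-1},
\]
using $F\mapsto D=V\setminus F$. Modulo $2$, this sum is congruent to the number of dominating sets of $G$. By the theorem of Brouwer--Csorba--Schrijver, every finite graph has an odd number of dominating sets. Therefore $\widetilde\chi(\Delta)$ is odd, hence nonzero, so $\widetilde H_k(\Delta;\mathbb K)\neq0$ for some $k$, and $b_{n-k-1,n}(R)\neq0$.

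The main obstacle is this parity statement. If a self-contained argument is wanted, I would first try the standard involution/linear-algebra proof over $\F_2$. One pairs non-dominating sets via inclusion--exclusion over the undominated vertices, which rewrites the count modulo $2$ as a count of pairs $(D,A)$ with $N_1[A]\cap D=\emptyset$. The symmetry of the closed-neighborhood relation then forces cancellations in pairs, leaving only the term $D=A=\emptyset$. Otherwise, I would simply cite the theorem.

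For the consequences, let $i$ be such that $b_{i,n}(R)\neq0$. Then $i\leq\mathtt{pd}(R)$, so
\[
\regt(R)\geq n-i\geq n-\mathtt{pd}(R)=\operatorname{depth}R
\]
by the Auslander--Buchsbaum formula~\eqref{eq:auslander-buchsbaum}. For the upper bound, Hochster's formula shows that $b_{i,j}(R)\neq0$ forces $\widetilde H_{j-i-1}(\Delta[W])\neq0$ for some $W$. That can only happen if $j-i-1\leq\dim\Delta[W]\leq\dim\Delta$, so $j-i\leq\dim\Delta+1=\dim R$. Taking the maximum over all nonzero Betti numbers gives $\regt(R)\leq\dim R$.
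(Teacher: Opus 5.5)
Your proposal is correct and follows the paper's proof essentially step for step. Both arguments use the same chain. Brouwer--Csorba--Schrijver gives an odd number of dominating sets, which forces $\widetilde\chi(\Delta)\neq 0$ and hence nonzero reduced homology of $\Delta$ over every field. Hochster's formula at full support $W=V$ then turns this into $b_{i,n}(R)\neq 0$. Auslander--Buchsbaum gives the depth bound, and $\dim\Delta[W]\le\dim\Delta$ gives the upper bound. Your optional involution sketch for the parity statement, via pairs $(D,A)$ with $N_1[A]\cap D=\emptyset$, is also sound, although the paper simply cites the theorem.
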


\begin{proof}
By~\cite[Theorem~1.1]{BrouwerCsorbaSchrijver2009}, the number of dominating sets of $G$ is odd. By taking complements, we can identify these sets with all faces of $\Delta$. Thus, by the definition of the reduced Euler characteristic
\cite[Definition~4.9]{MillerSturmfels2005},
\[
\widetilde\chi(\Delta)
=\sum_{F\in\Delta}(-1)^{|F|-1}
\equiv|\Delta|\equiv1\pmod2.
\]

In particular, $\widetilde\chi(\Delta)\neq0$.
If all reduced homology groups of $\Delta$ over $\mathbb K$
vanished (i.e., if $\Delta$ is acyclic), then~\cite[Lemma~4.10]{MillerSturmfels2005} would give
$\widetilde\chi(\Delta)=0$.
Consequently, over every field $\mathbb K$, there is an integer
$j\geq-1$ such that
$\widetilde H_j(\Delta;\mathbb K)\neq0$.

Hochster's formula~\cite[Corollary~5.12]{MillerSturmfels2005}, summing over all squarefree multidegrees of total degree $q$, and using that reduced homology and cohomology over $\mathbb K$ have the same dimensions, reads
\begin{equation}\label{eq:hochster-graded}
b_{i,q}(R)
=\sum_{\substack{W\subseteq V\\|W|=q}}
\dim_{\mathbb K}\widetilde H_{q-i-1}(\Delta[W];\mathbb K).
\end{equation}
Taking $q=n$ and $i=n-j-1$ gives
$b_{i,n}(R)=\dim_{\mathbb K}\widetilde H_j(\Delta;\mathbb K)\neq0$.
In particular, $i\leq\mathtt{pd} R$. The definition of regularity and~\eqref{eq:auslander-buchsbaum}
now give
\[
\regt(R)\geq n-i\geq n-\mathtt{pd} R
=\operatorname{depth} R.
\]
Finally, $\dim\Delta[W]\leq\dim\Delta$ for every $W\subseteq V$.
Hence~\eqref{eq:hochster-graded} implies
$q-i\leq\dim\Delta+1=\dim R$ whenever $b_{i,q}(R)\neq0$,
proving the upper bound.
\end{proof}

\begin{theorem}\label{thm:main}
If $\mathtt{pd}(S/\mathcal{N}_G^1)=\Gamma(G)$, then
\[
\mathrm{v}(\mathcal{N}_G^1)
\leq n-\Gamma(G)
=\operatorname{depth}(S/\mathcal{N}_G^1)
\leq\regt(S/\mathcal{N}_G^1).
\]
\end{theorem}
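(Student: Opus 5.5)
The argument is a chain of three comparisons, each supplied by an earlier result, with the hypothesis $\mathtt{pd}(S/\mathcal{N}_G^1)=\Gamma(G)$ used only for the middle equality. Throughout, write $R=S/\mathcal{N}_G^1$.

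\textbf{First inequality.} The bound $\mathrm{v}(\mathcal{N}_G^1)\leq n-\Gamma(G)$ is exactly the first assertion of Lemma~\ref{lem:v-upper-bound}. It holds for every connected graph and does not use the hypothesis. Recall how it arises: take a minimal dominating set $D$ of maximum size $\Gamma(G)$. For each $d\in D$, minimality gives a private neighbor $u_d$ with $N_1[u_d]\cap D=\{d\}$. Put $U=\{u_d : d\in D\}$; it dominates $D$, and Theorem~\ref{P:vformula} then bounds $\mathrm{v}$ by $|N_1[U]\setminus D|\leq n-|D|$.

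\textbf{Middle equality.} Apply the Auslander--Buchsbaum formula~\eqref{eq:auslander-buchsbaum} to $R$, which gives $\operatorname{depth} R=n-\mathtt{pd}(R)$. The hypothesis $\mathtt{pd}(R)=\Gamma(G)$ then yields $\operatorname{depth} R=n-\Gamma(G)$ directly. As a consistency check, Lemma~\ref{lem:v-upper-bound} already gives $\operatorname{depth} R\leq n-\Gamma(G)$, so the hypothesis says precisely that this bound is attained.

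\textbf{Last inequality.} The bound $\operatorname{depth} R\leq\regt(R)$ is the first inequality of Lemma~\ref{lem:full-support-homology}. That lemma gets it by showing $b_{i,n}(R)\neq 0$ for some $i$: the dominance complex has nonzero reduced Euler characteristic because the number of dominating sets is odd, and Hochster's formula turns this into a nonzero Betti number in degree $n$. Such an $i$ satisfies $i\leq\mathtt{pd}(R)$, hence $\regt(R)\geq n-i\geq n-\mathtt{pd}(R)=\operatorname{depth}R$.

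Chaining the three steps gives the displayed statement. Since all the substantive work (the combinatorial formula, the parity of dominating sets, and Hochster's formula) sits in the lemmas already proved, I expect no step to be an obstacle.
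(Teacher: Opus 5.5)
Your proposal is correct and matches the paper's proof: the middle equality comes from the Auslander--Buchsbaum formula~\eqref{eq:auslander-buchsbaum} together with the hypothesis. The two outer inequalities are taken directly from Lemmas~\ref{lem:v-upper-bound} and~\ref{lem:full-support-homology}.
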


\begin{proof}
By~\eqref{eq:auslander-buchsbaum} and the hypothesis,
\[
\operatorname{depth}(S/\mathcal{N}_G^1)
=n-\mathtt{pd}(S/\mathcal{N}_G^1)
=n-\Gamma(G).
\]
The conclusion follows from
Lemmas~\ref{lem:v-upper-bound} and~\ref{lem:full-support-homology}.
\end{proof}

Note that the hypothesis is equivalent to
$\operatorname{depth}(S/\mathcal{N}_G^1)=n-\Gamma(G)$, the largest
depth allowed by its associated primes.
By Proposition~\ref{trivial1}, the theorem also applies to
$\mathcal{N}_G^t$ whenever
$\mathtt{pd}(S/\mathcal{N}_G^t)=\Gamma(G^t)$.

\begin{remark}\label{rem:pd-hypothesis-families}
The hypothesis of Theorem~\ref{thm:main} holds for several
classes of graphs. Indeed, every maximum independent set is a
minimal dominating set, so Lemma~\ref{lem:v-upper-bound}
and~\eqref{eq:auslander-buchsbaum} give
\[
\alpha_1(G)\leq\Gamma(G)
\leq\mathtt{pd}(S/\mathcal{N}_G^1).
\]
In~\cite[Question~3.10]{moradi2026homological}, it is asked whether
$\mathtt{pd}(S/\mathcal{N}_G^1)$ equals $\Gamma(G)$, or even $\alpha_1(G)$,
for chordal and for bipartite graphs. By~\cite[Theorem~2]{CivanSingh2026},
$\mathtt{pd}(S/\mathcal{N}_G^1)=\alpha_1(G)$ whenever $G$
is a K\"onig--Egerv\'ary graph (in particular, a bipartite graph),
a chordal graph, a co-chordal graph, a cograph, or a comparability graph. Hence the hypothesis holds for all these classes,
including paths and caterpillars. For bipartite and chordal graphs, the
inequality $\mathrm{v}(\mathcal{N}_G^1)\leq\regt(S/\mathcal{N}_G^1)$ was
already proved in~\cite[Theorem~4.8]{JLS_v_number_neighborhood}, so
Theorem~\ref{thm:main} extends it to K\"onig--Egerv\'ary graphs,
co-chordal graphs, cographs, and comparability graphs.

Moreover, every power of a tree is chordal;
see~\cite[Section~1]{AgnarssonHalldorsson2007}.
Consequently, Proposition~\ref{trivial1} and
Theorem~\ref{thm:main} (or~\cite[Theorem~4.8]{JLS_v_number_neighborhood}) yield
\[
\mathrm{v}(\mathcal{N}_T^t)
\leq\regt(S/\mathcal{N}_T^t)
\text{ for every tree }T\text{ and every }t\geq1.
\]

For cycles, $\Gamma(C_n)=\lfloor n/2\rfloor$. Indeed, a maximum independent
set shows that $\Gamma(C_n)\geq\lfloor n/2\rfloor$. Conversely, let $D$ be a
minimal dominating set of $C_n$, and map each $d\in D$ to an external private
neighbor of $d$ if $d$ has one; otherwise, by minimality, $d$ has no neighbor
in $D$, and we map $d$ to its clockwise neighbor. This defines an injective map
$D\to V(C_n)\setminus D$, so $|D|\leq n/2$.
Since $\mathcal{N}_{C_n}^1$ is the path ideal of $C_n$ generated by the
monomials corresponding to the paths with three vertices, the
projective-dimension formula in~\cite[Corollary~5.5]{AlilooeeFaridi2015}
shows that the hypothesis holds exactly when $n\not\equiv1\pmod4$.

\end{remark}

\begin{corollary}\label{cor:cm-v-reg}
Let $t\geq1$. If $S/\mathcal{N}_G^t$ is Cohen--Macaulay, then

\[
\mathrm{v}(\mathcal{N}_G^t)
\leq\operatorname{depth}(S/\mathcal{N}_G^t)
=\regt(S/\mathcal{N}_G^t)
=\dim(S/\mathcal{N}_G^t)
=n-\gamma_t(G).
\]
\end{corollary}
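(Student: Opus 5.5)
The plan is to reduce to the case $t=1$ via Proposition~\ref{trivial1}, replacing $G$ by $H=G^t$. Then $\mathcal{N}_G^t=\mathcal{N}_H^1$, so the $\mathrm{v}$-number, depth, regularity and dimension of $S/\mathcal{N}_G^t$ equal those of $S/\mathcal{N}_H^1$. By Lemma~\ref{trivial2}, $\gamma_1(H)=\gamma_t(G)$ and $\Gamma(H)$ is the largest size of a minimal $t$-dominating set of $G$. Note that $H$ is connected whenever $G$ is, and Lemmas~\ref{lem:v-upper-bound} and~\ref{lem:full-support-homology} are stated for an arbitrary simple connected graph on $n\ge 2$ vertices, so both apply to $H$.

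The main chain then runs as follows. Set $R=S/\mathcal{N}_H^1$. Lemma~\ref{lem:v-upper-bound} gives
\[
\mathrm{v}(\mathcal{N}_H^1)\le n-\Gamma(H)
\quad\text{and}\quad
\operatorname{depth}R\le n-\Gamma(H)\le \dim R=n-\gamma_1(H).
\]
Lemma~\ref{lem:full-support-homology} gives
\[
\operatorname{depth}R\le \regt(R)\le \dim R.
\]
The Cohen--Macaulay hypothesis says $\operatorname{depth}R=\dim R$. Hence every quantity sandwiched between them coincides, so
\[
\operatorname{depth}R=n-\Gamma(H)=\regt(R)=\dim R=n-\gamma_1(H)=n-\gamma_t(G).
\]
In particular $n-\Gamma(H)=\operatorname{depth}R$. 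Combined with $\mathrm{v}\le n-\Gamma(H)$, this yields $\mathrm{v}(\mathcal{N}_G^t)\le \operatorname{depth}(S/\mathcal{N}_G^t)$, which is the full chain of the corollary. Equivalently, one can observe that Cohen--Macaulayness forces $\Gamma(H)=\gamma_1(H)$ and $\mathtt{pd}(R)=n-\dim R=\Gamma(H)$, and then invoke Theorem~\ref{thm:main} for $H$.

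I expect no real obstacle. The only points to check are bookkeeping. First, the dimension formula $\dim R=n-\gamma_1(H)$ must transfer correctly to $\gamma_t(G)$; it does, since the minimal dominating sets of $G^t$ are exactly the minimal $t$-dominating sets of $G$, and the minimal prime decomposition is indexed by them. Second, the degenerate case where $H$ is complete (for instance $t\ge\operatorname{diam}(G)$) needs no separate treatment. In that case $\mathcal{N}_G^t=\langle x_1\cdots x_n\rangle$ is a hypersurface, hence Cohen--Macaulay, and all quantities equal $n-1$, consistent with $\gamma_t(G)=1$ and Proposition~\ref{prop:radius-diameter-detection}.
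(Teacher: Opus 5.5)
Your proposal is correct and follows essentially the same route as the paper: pass to $H=G^t$ via Proposition~\ref{trivial1}, use the Cohen--Macaulay hypothesis to force every quantity from Lemmas~\ref{lem:v-upper-bound} and~\ref{lem:full-support-homology} to coincide between $\operatorname{depth}R$ and $\dim R$, and transfer $\gamma_1(H)=\gamma_t(G)$ by Lemma~\ref{trivial2}. The paper phrases the final step by citing Theorem~\ref{thm:main}, which is exactly the alternative you mention.
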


\begin{proof}
Set $H=G^t$ and $R=S/\mathcal{N}_H^1=S/\mathcal{N}_G^t$. Since $R$ is Cohen--Macaulay, we have
$\operatorname{depth} R=\dim R$. Thus
Lemma~\ref{lem:v-upper-bound} gives
$n-\Gamma(H)=\operatorname{depth} R=\dim R$.
Theorem~\ref{thm:main} and Lemma~\ref{lem:full-support-homology}
give the desired inequality and equalities.
Finally, $\gamma_1(H)=\gamma_t(G)$ by Lemma~\ref{trivial2}.
\end{proof}

The inequality also holds for $t\geq\operatorname{rad}(G)$,
without any assumption on projective dimension.

\begin{proposition}\label{cor:radius-v-reg}
If $t\geq\operatorname{rad}(G)$, then
\[
\mathrm{v}(\mathcal{N}_G^t)
=\delta_t(G)-1
\leq\regt(S/\mathcal{N}_G^t).
\]
\end{proposition}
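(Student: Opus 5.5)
The equality $\mathrm{v}(\mathcal{N}_G^t)=\delta_t(G)-1$ is exactly the first item of Proposition~\ref{prop:v-number-radius}, so only the inequality $\delta_t(G)-1\leq\regt(S/\mathcal{N}_G^t)$ needs proof. The plan is to find a nonzero graded Betti number $b_{i,q}(R)$, with $R=S/\mathcal{N}_G^t$, satisfying $q-i\geq\delta_t(G)-1$. Set $H=G^t$, so that $\mathcal{N}_G^t=\mathcal{N}_H^1=I_\Delta$ with $\Delta=\Dom(H)$. Since $t\geq\operatorname{rad}(G)$, some vertex $u$ satisfies $N_t[u]=V$. Let $v$ be a vertex with $|N_t[v]|=\delta_t(G)$, and put $W=N_t[v]$, $q=|W|=\delta_t(G)$.

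The key step is to compute $\widetilde H_{\ast}(\Delta[W];\mathbb K)$ through Hochster's formula~\eqref{eq:hochster-graded}. A subset $F\subseteq W$ fails to be a face of $\Delta$ exactly when $N_t[w]\subseteq F$ for some $w\in V$. Since $F\subseteq W$, this forces $N_t[w]\subseteq W$. By minimality of $|W|$ we have $|N_t[w]|\geq|W|$, so the only possibility is $N_t[w]=W$, that is, $F=W$.

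Hence every proper subset of $W$ is a face, and $W$ itself is not. So $\Delta[W]$ is the boundary of the simplex on $W$, a sphere of dimension $q-2$, and $\widetilde H_{q-2}(\Delta[W];\mathbb K)\cong\mathbb K$. This needs $q\geq1$, which always holds; if $q=1$ the complex is $\{\emptyset\}$ and $\widetilde H_{-1}\neq0$, which is still consistent.

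Taking $q-i-1=q-2$, that is, $i=1$, formula~\eqref{eq:hochster-graded} gives $b_{1,q}(R)\neq0$. This is simply the observation that $\mathbf{x}_W$ is a minimal generator of $\mathcal{N}_G^t$: no $N_t[w]$ is properly contained in $W$ by minimality of size. Therefore
\[
\regt(R)\geq q-1=\delta_t(G)-1.
\]

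Read this way, the argument does not even use $t\geq\operatorname{rad}(G)$ for the inequality. That hypothesis is needed only for the equality with the $\mathrm{v}$-number. I would present it in the elementary form: a minimal generator of degree $d$ yields $\regt\geq d-1$, and the smallest ball gives a minimal generator of degree $\delta_t(G)$.

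The only point requiring care, and the main obstacle such as it is, is verifying that $\mathbf{x}_{N_t[v]}$ is a minimal generator. This follows because any generator $\mathbf{x}_{N_t[w]}$ dividing it satisfies $|N_t[w]|\geq\delta_t(G)$, and so equals it.
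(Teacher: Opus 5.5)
Your proof is correct and follows the paper's argument: the equality is the first item of Proposition~\ref{prop:v-number-radius}, and the inequality comes from $\mathbf{x}_{N_t[v]}$ with $|N_t[v]|=\delta_t(G)$ being a minimal generator, so that $b_{1,\delta_t(G)}(S/\mathcal{N}_G^t)\neq0$. Your Hochster's-formula computation of $\Delta[W]$ as the boundary of a simplex is valid, but it is a heavier restatement of the same minimal-generator fact, and you are right that the hypothesis $t\geq\operatorname{rad}(G)$ is needed only for the equality.
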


\begin{proof}
Proposition~\ref{prop:v-number-radius} gives the equality.
A neighborhood monomial of degree $\delta_t(G)$ is a minimal
generator of $\mathcal{N}_G^t$, since a proper divisor among the
neighborhood generators would have smaller degree.
Thus $b_{1,\delta_t(G)}(S/\mathcal{N}_G^t)\neq0$, and the inequality
follows from the definition of regularity.
\end{proof}

\begin{example}\label{ex:pd-hypothesis-not-cm}
Let $G=P_5$ be a path with vertices $v_1,\ldots,v_5$ in this order. Then
\[
I=\mathcal{N}_G^1
=\langle x_1x_2,\ x_2x_3x_4,\ x_4x_5\rangle.
\]
The Taylor resolution~\cite[Section~4.3.2]{MillerSturmfels2005}
is minimal, since deleting any generator from a nonempty subset
changes its least common multiple. It has the form
\[
0\longrightarrow S(-5)\longrightarrow S(-4)^3
\longrightarrow S(-2)^2\oplus S(-3)
\longrightarrow S\longrightarrow S/I\longrightarrow0.
\]
The minimal dominating sets are
$\{v_1,v_4\}$, $\{v_2,v_4\}$, $\{v_2,v_5\}$, and
$\{v_1,v_3,v_5\}$. Consequently,
\[
\mathtt{pd}(S/I)=3=\Gamma(G),\qquad
\operatorname{depth}(S/I)=\regt(S/I)=2<3=\dim(S/I).
\]
Thus Theorem~\ref{thm:main} gives $\mathrm{v}(I)\leq2$
although $S/I$ is not Cohen--Macaulay and
$1<\operatorname{rad}(G)=2$. One can check that $\mathrm{v}(I)=2$. 
\end{example}


\section{Computations for particular classes of graphs}\label{S:path-trees-cycles}
In this section, we use the combinatorial characterization of the $\mathrm{v}$-number to study it for some classes of graphs, such as paths, caterpillars, and cycles. 
\subsection{Paths} We start the analysis with paths. Given $n\geq 1$, we denote by $P_n$ the path with $n$ vertices. 
\begin{theorem}\label{thm:paths}
Let $t\ge 1$ and $n\ge t+1$. Then
\[
\mathrm{v}(\mathcal{N}^t_{P_n}) \;=\; t\,\gamma_t(P_n) \;=\; t\left\lceil \frac{n}{2t+1}\right\rceil .
\]
\end{theorem}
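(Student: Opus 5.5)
The plan is to use the combinatorial formula of Theorem~\ref{P:vformula}. Write $P_n=v_1v_2\cdots v_n$ and $k=\gamma_t(P_n)$. I take as standard that $k=\lceil n/(2t+1)\rceil$: a ball $N_t[v_i]$ is an interval of at most $2t+1$ consecutive vertices, and $\lceil n/(2t+1)\rceil$ vertices spaced $2t+1$ apart suffice. It then remains to prove $\mathrm{v}(\mathcal{N}^t_{P_n})\ge tk$ and $\mathrm{v}(\mathcal{N}^t_{P_n})\le tk$.

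\emph{Lower bound.} Let $D=\{d_1<\dots<d_m\}$ be a minimal $t$-dominating set, so $m\ge k$, and let $U\subseteq P^t_{P_n}[D]$ $t$-dominate $D$. For each $i$ choose $u_i\in U$ with $d_i\in N_t[u_i]$. Then $I_i:=N_t[u_i]$ is an interval whose only point of $D$ is $d_i$, so $I_i\subseteq(d_{i-1},d_{i+1})$. Let $l_i$ and $r_i$ be the number of vertices of $I_i$ strictly to the left and to the right of $d_i$.
\begin{itemize}
\item Inside the gap $(d_i,d_{i+1})$, the set $N_t[U]\setminus D$ contains at least $\max(r_i,l_{i+1})\ge \tfrac12(r_i+l_{i+1})$ vertices.
\item Before $d_1$ it contains at least $l_1$ vertices, and after $d_m$ at least $r_m$.
\end{itemize}
Summing gives $|N_t[U]\setminus D|\ge \tfrac12\sum_i(l_i+r_i)+\tfrac12(l_1+r_m)$. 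If no ball is truncated by an end of the path, then $l_i+r_i=2t$ and we get at least $tm\ge tk$. A ball truncated at an end loses exactly the vertices it would have had beyond that end. That loss has to be compensated by the boundary terms $l_1$, $r_m$ (or by the fact that the gap terms become full rather than halved). This end bookkeeping must be done carefully, using $n\ge t+1$ so that every ball has at least $t+1$ vertices. I expect this to be routine but fiddly.

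\emph{Upper bound.} The guiding examples are $P_3$ and $P_6$ with $t=1$, where the optimum uses $D=\{v_2\},U=\{v_1\}$ and $D=\{v_2,v_5\},U=\{v_1,v_6\}$. Two patterns cost only $t$ per element of $D$.
\begin{itemize}
\item \emph{End element.} Take $d=v_{t+1}$ and $u=v_1$. Then $N_t[u]=\{v_1,\dots,v_{t+1}\}$ contributes exactly $t$ non-$D$ vertices.
\item \emph{Adjacent pair.} Take $d,d+1\in D$, with $u=d-t$ facing left and $u'=d+1+t$ facing right. The balls are $[d-2t,d]$ and $[d+1,d+1+2t]$. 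Each is $t$-private: it contains only one of $d,d+1$, provided the neighbouring elements of $D$ are far enough away. Together they contribute $4t$ vertices for two elements of $D$, so $2t$ per element, which is too much.
\end{itemize}
So the adjacent-pair pattern alone does not meet the target, and the construction must make the balls of different elements of $U$ overlap on non-$D$ vertices. The natural candidate is $D$ consisting of vertices spaced so that the rightward ball of one block and the leftward ball of the next block share their non-$D$ part; the equality case of the lower bound, $\max(r_i,l_{i+1})=\tfrac12(r_i+l_{i+1})$, forces $r_i=l_{i+1}$ and fully overlapping gap coverage. Concretely, I will try $d_i$ with consecutive spacing $2t+1$ and $u_i=d_i+t$ on alternating sides, so that pairs of balls coincide on the gap between them, and check $t$-privacy and minimality of $D$ (each $d_i$ keeps a private neighbour) directly. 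Finally I will adjust the last block according to $n \bmod (2t+1)$ so that $|D|=k$ and the total count is exactly $tk$.

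Finding this explicit extremal construction, and verifying it for every residue of $n$ modulo $2t+1$, is the step I expect to be the main obstacle.
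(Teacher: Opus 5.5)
Your upper-bound plan is the paper's construction and will work once written out. Take $d_1=v_{t+1}$ with $u_1=v_1$. Then pair consecutive elements of $D$ at distance $2t+1$, with $u=d_i+t$ and $u'=d_{i+1}-t$, so both balls have the same $2t$ non-$D$ vertices. At the right end, use a second end element $d_k=v_{n-t}$, $u_k=v_n$ when $k$ is even, or shift the last pair left by $k(2t+1)-n$ when $k$ is odd. Minimality of $D$ is automatic, since $|D|=\gamma_t(P_n)$. The genuine gap is in the lower bound: this is not routine bookkeeping, and your estimate cannot be completed as stated.

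Assume $m\ge 2$ and put $\lambda=2t+1-|I_1|$ and $\rho=2t+1-|I_m|$, both at most $t$. Your averaged sum then equals $tm-\tfrac12(\lambda-l_1)-\tfrac12(\rho-r_m)$. So each truncated end can cost up to $t/2$, and no local term pays it back.
\begin{itemize}
\item For $t=1$, $n=4$, $D=U=\{v_1,v_4\}$, your bound gives $\max(r_1,l_2)+l_1+r_2=1<2=tk$. The true count is $2$: the $\max$ discards the fact that the gap $\{v_2,v_3\}$ is covered by two disjoint pieces.
\item Worse, the target $|N_t[U]\setminus D|\ge tm$ you aim for is false. For $t=1$, $n=5$, $D=U=\{v_1,v_3,v_5\}$, the count is $2<3$.
\end{itemize}
So your argument only yields $t(m-1)$. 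That suffices when $m\ge k+1$, but not when $m=k$.

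The missing idea is global, and it is how the paper argues. Split on whether $N_t[U]$ covers the path.
\begin{itemize}
\item If some vertex $x\notin N_t[U]$, cut at $x$. On each side, select every other ball, starting from the one nearest $x$. The selected balls are pairwise disjoint, and only $I_1$ or $I_m$ can be truncated. That ball is selected only when its side holds an odd number $s$ of balls, so each side gives at least $ts$ and the total is at least $tm$.
\item If $N_t[U]=V(P_n)$, the count is exactly $n-m$. For $m=k\ge 2$, the inequality $n>(k-1)(2t+1)$ gives $n-k\ge 2t(k-1)\ge tk$.
\end{itemize}
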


\begin{proof}
Label the vertices of $P_n$ by $1,\ldots,n$. If $n\leq 2t+1$, then $\operatorname{rad}(P_n)\leq t$, so Proposition~\ref{prop:v-number-radius} gives $\mathrm{v}(\mathcal{N}^t_{P_n})=\delta_t(P_n)-1=t$, since $n\geq t+1$. This agrees with the formula. Hence assume $n>2t+1$, and write $L=2t+1$ and $k=\lceil n/L\rceil\geq 2$. Since every $t$-ball contains at most $L$ vertices, every $t$-dominating set has at least $k$ vertices.

For the upper bound, begin with positions $d_i=t+1+(i-1)L$, for $1\leq i\leq k$, and put $\delta=kL-n$. If $k$ is even, shift $d_k$ to the left by $\delta$; if $k$ is odd, shift both $d_{k-1}$ and $d_k$ to the left by $\delta$. Keep the notation $d_i$ for the resulting positions. Since $0\leq\delta\leq 2t$, these satisfy $d_1=t+1$, $d_k=n-t$, and $1\leq d_{i+1}-d_i\leq 2t+1$. Thus $D=\{d_1,\ldots,d_k\}$ is a $t$-dominating set of minimum cardinality, and $\gamma_t(P_n)=k$.

Set $u_1=1$, and pair the remaining vertices of $D$ consecutively, leaving $d_k$ unpaired when $k$ is even. For each pair $(d_i,d_{i+1})$, we have $d_{i+1}-d_i=2t+1$; choose $u_i=d_i+t$ and $u_{i+1}=d_{i+1}-t$. If $k$ is even, also set $u_k=n$. Each $u_i$ is a $t$-private neighbor of $d_i$: the ball of $u_1$ is $[1,d_1]$, the two balls of a pair are $[d_i,d_{i+1}-1]$ and $[d_i+1,d_{i+1}]$, and the ball of $u_k$ in the even case is $[d_k,n]$. After removing $D$, the left end contributes $t$ vertices, each pair contributes $2t$, and the right end contributes $t$ when present. These contributions are disjoint, so, with $U=\{u_1,\ldots,u_k\}$, Theorem~\ref{P:vformula} gives $\mathrm{v}(\mathcal{N}^t_{P_n})\leq |N_t[U]\setminus D|=tk$.

For the lower bound, let $D=\{d_1<\cdots<d_m\}$ be any minimal $t$-dominating set. In Theorem~\ref{P:vformula}, it suffices to choose one private neighbor $u_i$ for each $d_i$, so that $N_t[u_i]\cap D=\{d_i\}$ (see \cite[Rem. 3.7]{JLS_v_number_neighborhood}). Write $I_i=N_t[u_i]$, $X=\bigcup_{i=1}^m I_i$, and $C=|X\setminus D|$. We have $m\geq k$. Privacy implies that nonconsecutive intervals $I_i$ are disjoint. Moreover, only $I_1$ can contain $1$, and only $I_m$ can contain $n$. Hence $|I_i\setminus D|=2t$ for $1<i<m$, while $|I_1\setminus D|,|I_m\setminus D|\geq t$.

Suppose first that $X\neq V(P_n)$, and choose $x\notin X$. Every $I_i$ lies entirely to one side of $x$. On each side, select alternating balls, starting with the ball nearest $x$. If that side contains an even number $s$ of balls, the selected $s/2$ balls all contribute $2t$ vertices outside $D$. If $s$ is odd, all selected balls except possibly the outermost contribute $2t$, and the outermost contributes at least $t$, giving at least $2t(s-1)/2+t=ts$ vertices. All selected balls are disjoint, including across the two sides. Therefore $C\geq tm\geq tk$.

Finally, suppose $X=V(P_n)$, so that $C=n-m$. Selecting $I_2,I_4,\ldots,I_{m-1}$ when $m$ is odd, or $I_1,I_3,\ldots,I_{m-1}$ when $m$ is even, gives $C\geq t(m-1)$. If $m\geq k+1$, this yields $C\geq tk$. Otherwise $m=k$, and
\[
C=n-k\geq 2t(k-1)\geq tk,
\]
since $n>(k-1)(2t+1)$ and $k\geq 2$. Thus, for every pair of sets $D$ and $U$ such that $U\subseteq P_G^t[D]$ and $U$ $t$-dominates $D$, one has that $|N_t[U]\setminus D|$ is at least $tk$, and Theorem~\ref{P:vformula} gives $\mathrm{v}(\mathcal{N}^t_{P_n})\geq tk$. Together with the upper bound, this proves the result. 

\end{proof}

\subsection{Cycles}
For $n\geq 3$, we denote by $C_n$ the cycle with $n$ vertices.
The following is a well-known result in the literature. We add a proof for completeness.

\begin{proposition}\label{prop:cycle-domination}
Let $n \geq 3$ and $t \geq 1$. The $t$-domination number of a cycle $C_n$ is
\[
    \gamma_t(C_n)=\left\lceil \frac{n}{2t+1}\right\rceil.
\]
\end{proposition}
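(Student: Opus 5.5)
The proof splits into a counting lower bound and an explicit construction for the upper bound. Write $L=2t+1$ and $k=\lceil n/L\rceil$.

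\emph{Lower bound.} Every $t$-ball $N_t[v]$ in $C_n$ has at most $L$ vertices: it consists of $v$ and at most $t$ vertices in each direction along the cycle. When $n\le L$ the ball is the whole cycle. A $t$-dominating set $D$ satisfies $V(C_n)=N_t[D]$, so
\[
n\le \sum_{d\in D}|N_t[d]|\le |D|\,L,
\]
which gives $|D|\ge n/L$ and hence $\gamma_t(C_n)\ge k$.

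\emph{Upper bound.} Label the vertices $0,1,\dots,n-1$ cyclically and take $D=\{0,L,2L,\dots,(k-1)L\}$.
\begin{itemize}
\item If $k=1$, i.e.\ $n\le L$, then $\operatorname{rad}(C_n)=\lfloor n/2\rfloor\le t$. Any single vertex $t$-dominates, so $\gamma_t(C_n)=1$.
\item If $k\ge2$, the points of $D$ are distinct because $(k-1)L<n$. Consecutive points in $D$ are at cyclic gap exactly $L$, except the wrap-around gap from $(k-1)L$ back to $0$, which equals $n-(k-1)L\in[1,L]$.
\end{itemize}
Any vertex $x$ lies strictly between, or on, two cyclically consecutive points of $D$ separated by a gap $g\le L=2t+1$. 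Its distance to the nearer endpoint along the arc is therefore at most $\lfloor g/2\rfloor\le t$. Since cycle distance is at most arc distance, $x\in N_t[D]$. So $D$ is $t$-dominating with $|D|=k$.

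Combining the two bounds gives equality. The only step needing care is the wrap-around gap bound $1\le n-(k-1)L\le L$. It follows from $(k-1)L<n\le kL$, which is the definition of the ceiling.
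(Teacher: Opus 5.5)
Your proof is correct and takes essentially the same route as the paper. Both use the counting bound $n\le |D|(2t+1)$, which comes from each $t$-ball having at most $2t+1$ vertices. Both then use the explicit set $\{v_{i(2t+1)}: 0\le i\le k-1\}$, whose cyclic gaps, including the wrap-around arc, are at most $2t+1$; you add the explicit check that this last gap lies in $[1,2t+1]$.
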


\begin{proof}
Label the vertices of $C_n$ cyclically by $v_0,\ldots,v_{n-1}$. If $n \leq 2t+1$, then any single vertex $t$-dominates $C_n$, so the result follows. Thus assume $n>2t+1$, and put $k=\lceil n/(2t+1)\rceil$.

Let $D=\{d_0,\ldots,d_{k-1}\}$, where $d_i=v_{i(2t+1)}$. The distance along the cycle between consecutive vertices of $D$ is at most $2t+1$, including the final arc from $d_{k-1}$ to $d_0$. Hence every vertex is within distance $t$ of a vertex of $D$, so $D$ is a $t$-dominating set. On the other hand, every $t$-closed neighborhood contains exactly $2t+1$ vertices. Thus, any $t$-dominating set has at least $\lceil n/(2t+1)\rceil$ vertices, proving the result.
\end{proof}

\begin{lemma}\label{lem:cycle-v-lower}
Let $t \geq 1$ and $n>2t+1$. Then
\[
    \mathrm{v}(\mathcal{N}^t_{C_n}) \geq
    2t\left\lceil \frac{\gamma_t(C_n)}{2}\right\rceil.
\]
\end{lemma}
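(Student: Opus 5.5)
We will use the combinatorial description of Theorem~\ref{P:vformula} and argue as in the lower bound of Theorem~\ref{thm:paths}. Label $C_n$ cyclically and let $k=\gamma_t(C_n)=\lceil n/(2t+1)\rceil$ (Proposition~\ref{prop:cycle-domination}). Since $n>2t+1$, every $t$-ball $N_t[v]$ is an arc of exactly $2t+1$ consecutive vertices, and $k\geq 2$.

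Let $D=\{d_1,\dots,d_m\}$ be a minimal $t$-dominating set, listed in cyclic order, and let $U\subseteq P^t_{C_n}[D]$ $t$-dominate $D$. As in the path case, it suffices to take one private neighbor $u_i$ for each $d_i$ with $N_t[u_i]\cap D=\{d_i\}$ (cf.~\cite[Rem.~3.7]{JLS_v_number_neighborhood}). Set $I_i=N_t[u_i]$, so each $I_i$ is an arc of $2t+1$ vertices meeting $D$ only in $d_i$, and $|I_i\setminus D|=2t$ for every $i$. On a cycle there are no endpoint effects, which is why the bound involves $2t$ per selected ball rather than the $t$ contributions at the ends of a path.

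The key step is disjointness. If $I_i\cap I_j\neq\emptyset$ with $i,j$ not cyclically consecutive, then the union $I_i\cup I_j$ is an arc (provided it is not the whole cycle) containing $d_i$ and $d_j$. Hence it contains every $d_\ell$ lying on one of the two cyclic arcs between $d_i$ and $d_j$. Such a $d_\ell$ would lie in $I_i$ or in $I_j$, which contradicts privacy. If instead $I_i\cup I_j$ is the whole cycle, then $n\leq 4t+2$, and in that case $m\leq 2$, so there are no nonconsecutive pairs.

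We therefore select $I_1,I_3,\dots$. This gives $\lfloor m/2\rfloor$ pairwise disjoint arcs, since for odd $m$ we drop the last one because $I_m$ is consecutive to $I_1$. Hence
\[
|N_t[U]\setminus D|\geq 2t\lfloor m/2\rfloor .
\]
When $m\geq k+1$, we have $\lfloor m/2\rfloor\geq\lceil k/2\rceil$, which gives the bound. When $m=k$ with $k$ even, the bound also follows directly.

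The main obstacle is the case $m=k$ with $k$ odd, where alternating selection yields only $t(k-1)$. Here I would argue as in the path proof. If some vertex $x\notin X=\bigcup I_i$, cut the cycle at $x$; the arcs then become intervals on a path with no end effect, since every ball has size $2t$ outside $D$. Selecting alternately from one side gives $\lceil k/2\rceil$ disjoint balls, and hence $C\geq 2t\lceil k/2\rceil$. If instead $X=V(C_n)$, then $C=n-k>(k-1)(2t+1)-k$. This must be compared with $t(k+1)$. The difference is $(k-1)2t-1-t(k+1)=t(k-3)-1$, which is nonnegative for $k\geq 5$ (and $t\geq 1$), but fails for $k=3$. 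For $k=3$ one uses $n\geq 4t+3$ directly: $C=n-3\geq 4t$, so the required inequality $C\geq 4t$ holds. Checking these small cases carefully is where I expect the most care to be needed.
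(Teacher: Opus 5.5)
Your proof is correct and follows the paper's argument essentially step for step: you take one private neighbor per $d_i$, use the disjointness of nonconsecutive arcs to get $2t\lfloor m/2\rfloor$, and for $m=k$ odd you either cut the cycle at a vertex outside $X$ to select $(k+1)/2$ disjoint arcs or use $|X\setminus D|=n-k$ when $X=V(C_n)$.

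Two small remarks. First, the step ``$n\le 4t+2$, hence $m\le 2$'' fails for minimal (non-minimum) sets; for example, $D=\{v_0,v_2,v_4\}$ in $C_6$ with $t=1$ is a minimal dominating set with $m=3$. The correct reason is that if $I_i\cup I_j=V(C_n)$, then privacy directly forces $D=\{d_i,d_j\}$. Second, $n\ge (k-1)(2t+1)+1$ already gives $n-k\ge 2t(k-1)\ge t(k+1)$ for every $k\ge 3$, so your separate treatment of $k=3$ is unnecessary.
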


\begin{proof}
Put $k=\gamma_t(C_n)=\lceil n/(2t+1)\rceil$. By Theorem~\ref{P:vformula}, it suffices to consider an arbitrary minimal $t$-dominating set $D=\{d_0,\ldots,d_{m-1}\}$, written in cyclic order, and a set $U=\{u_0,\ldots,u_{m-1}\}$ such that $N_t[u_i]\cap D=\{d_i\}$ (see \cite[Rem. 3.7]{JLS_v_number_neighborhood}). Write $I_i=N_t[u_i]$ and $X=N_t[U]$. We have $m\geq k$ and $|I_i\setminus D|=2t$ for every $i$.

Each $I_i$ is an arc containing $d_i$ and no other vertex of $D$, so nonconsecutive arcs $I_i$ are disjoint. Choosing alternating arcs around the cycle therefore gives
\[
    |X\setminus D| \geq 2t\left\lfloor\frac{m}{2}\right\rfloor.
\]
This is at least $2t\lceil k/2\rceil$ unless $m=k$ is odd.

Suppose, then, that $m=k$ is odd. If $X\neq V(C_n)$, cut the cycle at a vertex outside $X$ and relabel the vertices of $D$ in the resulting linear order. The arcs $I_i$ become intervals, and $I_0,I_2,\ldots,I_{k-1}$ are pairwise disjoint. Consequently, $|X\setminus D|\geq 2t(k+1)/2=t(k+1)$.

Finally, if $X=V(C_n)$, then $|X\setminus D|=n-k$. Since $n>(k-1)(2t+1)$ and $k\geq 3$, we obtain
\[
    |X\setminus D|=n-k
    \geq 2t(k-1)
    \geq t(k+1).
\]
Thus, for every pair of sets $D$ and $U$ such that $U\subseteq P_G^t[D]$ and $U$ $t$-dominates $D$, one has that $|N_t[U]\setminus D|$ is at least $2t\lceil k/2\rceil$, and Theorem~\ref{P:vformula} gives the desired bound.
\end{proof}

\begin{theorem}\label{thm:cycle-v}
Let $n\geq 3$ and $t\geq 1$. Then
\[
    \mathrm{v}(\mathcal{N}^t_{C_n})=
    \begin{cases}
        2t\left\lceil \dfrac{n}{2(2t+1)}\right\rceil,
        & n>2t+1,\\[6pt]
        n-1, & n\leq 2t+1.
    \end{cases}
\]
\end{theorem}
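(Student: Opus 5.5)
We split according to whether $n\le 2t+1$. If $n\le 2t+1$, every vertex of $C_n$ is within distance $t$ of every other, so $\operatorname{diam}(C_n)=\lfloor n/2\rfloor\le t$. Proposition~\ref{prop:radius-diameter-detection}, or directly Proposition~\ref{prop:v-number-radius} with $\delta_t(C_n)=n$, then gives $\mathrm{v}(\mathcal{N}^t_{C_n})=n-1$.

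Now assume $n>2t+1$ and put $k=\gamma_t(C_n)=\lceil n/(2t+1)\rceil\ge 2$ by Proposition~\ref{prop:cycle-domination}. We first check the identity $\lceil k/2\rceil=\lceil n/(2(2t+1))\rceil$, which is the nested-ceiling identity $\lceil\lceil x\rceil/2\rceil=\lceil x/2\rceil$ with $x=n/(2t+1)$. Lemma~\ref{lem:cycle-v-lower} then supplies the lower bound $\mathrm{v}(\mathcal{N}^t_{C_n})\ge 2t\lceil k/2\rceil$. It remains to build a minimal $t$-dominating set $D$ and $U\subseteq P^t_{C_n}[D]$, with $U$ $t$-dominating $D$, such that $|N_t[U]\setminus D|=2t\lceil k/2\rceil$. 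Theorem~\ref{P:vformula} then closes the argument.

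For the construction, label the vertices $v_0,\dots,v_{n-1}$ cyclically and mimic the pairing used for paths in Theorem~\ref{thm:paths}.

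\textbf{Case $k$ even.} Place $D$ in $k/2$ pairs $(d_{2j},d_{2j+1})$ with $d_{2j+1}-d_{2j}=2t+1$ exactly. Space the pairs so that every gap between consecutive elements of $D$ (cyclically) is at most $2t+1$ and at least $1$. This is possible because $(k-1)(2t+1)<n\le k(2t+1)$: the $k/2$ intra-pair gaps are exactly $2t+1$, the $k/2$ inter-pair gaps sum to $n-\tfrac{k}{2}(2t+1)\in\bigl[\tfrac{k}{2},\tfrac{k}{2}(2t+1)\bigr]$ (using $n\ge k$), and we distribute that sum evenly. Such a $D$ is $t$-dominating. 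It has size $\gamma_t$, hence is minimal.

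Inside each pair take $u_{2j}=d_{2j}+t$ and $u_{2j+1}=d_{2j+1}-t$. Their balls are $[d_{2j},d_{2j+1}-1]$ and $[d_{2j}+1,d_{2j+1}]$, each meeting $D$ exactly in the intended vertex. The union is $[d_{2j},d_{2j+1}]$, which contributes exactly $2t$ vertices outside $D$. The total is $2t\cdot k/2$.

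\textbf{Case $k$ odd.} We have $k\ge 3$ and need total $t(k+1)$. Use $(k-1)/2$ pairs as above plus one extra vertex $d_{k-1}$, chosen with gaps to its neighbours in $D$ at most $2t+1$. For $d_{k-1}$ choose $u_{k-1}$ so that $N_t[u_{k-1}]$ meets $D$ only in $d_{k-1}$; this needs a gap of at least $t+1$ on one side. The extra ball then adds at most $2t$ new vertices, giving $t(k-1)+2t=t(k+1)$. To make this possible we may need to enlarge a gap, which is feasible since $n>(k-1)(2t+1)\ge (k-1)(2t+1)/1$ leaves slack. Alternatively, place the single ball to overlap an existing pair's interval so that fewer new vertices are added.

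\textbf{Main obstacle.} The delicate step is the odd-$k$ construction in the tight regime where $n$ is just above $(k-1)(2t+1)$. There we must verify simultaneously that the gaps are at most $2t+1$ (for domination), that $d_{k-1}$ admits a private $t$-ball (a gap of at least $t+1$ on one side), and that the count equals $t(k+1)$ exactly. I would first try making all gaps between pairs equal to $2t+1$ except one short gap adjacent to $d_{k-1}$ absorbing the deficit $\delta=k(2t+1)-n\le 2t$, choosing $u_{k-1}$ on the long side. I would then check the counting by hand for small $t$, e.g.\ $n=7$, $t=1$, to confirm that no extra overlap is required.
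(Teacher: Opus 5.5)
Your overall strategy is the paper's: the case $n\le 2t+1$ via Proposition~\ref{prop:v-number-radius}, the lower bound from Lemma~\ref{lem:cycle-v-lower} together with $\lceil k/2\rceil=\lceil n/(2(2t+1))\rceil$, and an upper bound obtained by pairing consecutive vertices of $D$ at distance exactly $2t+1$. Your even case is correct. The lower end $k/2$ of your range for the inter-pair gaps comes from $n>(k-1)(2t+1)$, not from $n\ge k$, but the claim itself holds.

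The gap is the odd case. You flag it as the main obstacle and leave it unproved, and your sketch for it has three problems.
\begin{enumerate}
\item The criterion is wrong. The ball $N_t[u_{k-1}]$ is an arc of $2t+1$ consecutive vertices containing $d_{k-1}$ and no other vertex of $D$. So it must fit strictly between the two $D$-neighbours of $d_{k-1}$. If the two gaps at $d_{k-1}$ are $g_1,g_2$, you need $g_1+g_2\ge 2t+2$; one gap of at least $t+1$ is not enough. For example, $g_1=t+1$ and $g_2=1$ leave only $t+1$ vertices.
\item The remark that $n>(k-1)(2t+1)\ge (k-1)(2t+1)/1$ ``leaves slack'' is vacuous and proves nothing.
\item The alternative of letting the extra ball overlap a pair's interval, so that it adds fewer than $2t$ new vertices, cannot work. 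It would give $|N_t[U]\setminus D|<t(k+1)=2t\lceil k/2\rceil$, contradicting Lemma~\ref{lem:cycle-v-lower}.
\end{enumerate}

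The fix is the placement you list as a first try, which is exactly the paper's construction. Take $d_i=v_{i(2t+1)}$ from Proposition~\ref{prop:cycle-domination}. Then $d_{i+1}-d_i=2t+1$ for every $0\le i\le k-2$, and the entire deficit $k(2t+1)-n$ sits in the closing arc from $d_{k-1}$ to $d_0$, which no ball uses.
\begin{itemize}
\item Pair $(d_0,d_1),\dots,(d_{k-3},d_{k-2})$ as you do.
\item Let $u_{k-1}$ be $t$ steps before $d_{k-1}$. Its ball runs from $v_{(k-2)(2t+1)+1}$ to $d_{k-1}$ and meets $D$ only in $d_{k-1}$.
\item That ball adds exactly the $2t$ vertices strictly between $d_{k-2}$ and $d_{k-1}$, which no pair touches.
\end{itemize}
This gives $t(k-1)+2t=t(k+1)$ uniformly in $n$, so there is no tight regime to handle; for instance, $n=7$, $t=1$ yields $U=\{v_1,v_2,v_5\}$ and the value $4$. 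The same placement also handles the even case directly, without redistributing gaps.
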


\begin{proof}
If $n\leq 2t+1$, then every $t$-closed neighborhood is $V(C_n)$, and Proposition~\ref{prop:v-number-radius} gives $\mathrm{v}(\mathcal{N}^t_{C_n})=n-1$. Thus assume $n>2t+1$, and put $k=\gamma_t(C_n)$. By Lemma~\ref{lem:cycle-v-lower}, it remains to prove the upper bound.

Let $D=\{d_0,\ldots,d_{k-1}\}$, where $d_i=v_{i(2t+1)}$, be the minimum $t$-dominating set constructed in Proposition~\ref{prop:cycle-domination}. Let $G_i$ denote the arc strictly between $d_i$ and $d_{i+1}$, with indices read cyclically. In particular, $|G_i|=2t$ for $0\leq i\leq k-2$.

Pair the vertices of $D$ consecutively. For each pair $(d_{2i},d_{2i+1})$, choose $u_{2i}$ to be $t$ steps after $d_{2i}$ and $u_{2i+1}$ to be $t$ steps before $d_{2i+1}$. Then
\[
    N_t[u_{2i}]\setminus D
    =N_t[u_{2i+1}]\setminus D
    =G_{2i}.
\]
If $k$ is odd, choose $u_{k-1}$ to be $t$ steps before $d_{k-1}$, so that $N_t[u_{k-1}]\setminus D=G_{k-2}$. Every $u_i$ is a $t$-private neighbor of $d_i$, and the distinct arcs contributed by these choices are pairwise disjoint. Thus, with $U=\{u_0,\ldots,u_{k-1}\}$, Theorem~\ref{P:vformula} gives
\[
    \mathrm{v}(\mathcal{N}^t_{C_n})
    \leq |N_t[U]\setminus D|
    =2t\left\lceil\frac{k}{2}\right\rceil
    =2t\left\lceil\frac{n}{2(2t+1)}\right\rceil.
\]
Together with Lemma~\ref{lem:cycle-v-lower}, this proves the result.
\end{proof}

\begin{remark}
In particular, for $n\geq t+1$, we have
\[
    \mathrm{v}(\mathcal{N}^t_{C_{2n}})
    =2t\left\lceil\frac{n}{2t+1}\right\rceil.
\]
\end{remark}

\begin{remark}
By Theorem~\ref{thm:paths}, $\mathrm{v}(\mathcal{N}_{P_n}^1)=\gamma_1(P_n)=\lceil n/3\rceil$ for every $n\geq 2$. More generally, let $t\geq 1$, assume $n>2t+1$, and let $k=\left\lceil n/(2t+1) \right\rceil$.
Comparing Theorem~\ref{thm:paths} with Theorem~\ref{thm:cycle-v}, we obtain $\mathrm{v}(\mathcal{N}^t_{P_n})=\mathrm{v}(\mathcal{N}^t_{C_n})$ when $k$ is even,
and $\mathrm{v}(\mathcal{N}^t_{P_n})=\mathrm{v}(\mathcal{N}^t_{C_n})-t$ when $k$ is odd.
\end{remark}

\begin{example}\label{ex:cycle}
Let $n=10$ and $t=1$. Then $k=\gamma_1(C_{10})=4$, and the construction in Theorem~\ref{thm:cycle-v} gives $D=\{v_0,v_3,v_6,v_9\}$ and $U=\{v_1,v_2,v_7,v_8\}$, corresponding to the pairs $(v_0,v_3)$ and $(v_6,v_9)$. Here $N_1[U]\setminus D=\{v_1,v_2,v_7,v_8\}$, while $v_4$ and $v_5$ lie outside $N_1[U]$. Consequently,
\[
    |N_1[U]\setminus D|
    =4
    =2\left\lceil\frac{10}{6}\right\rceil
    =\mathrm{v}(\mathcal{N}^1_{C_{10}}).
\]
\end{example}

\begin{figure}[ht!]
    \centering
    \begin{tikzpicture}
        \draw[line width=8pt, red!15, line cap=round] (90:2.2) arc (90:-18:2.2);
        \draw[line width=8pt, red!15, line cap=round] (-126:2.2) arc (-126:-234:2.2);
        \draw (0,0) circle (2.2);
        \foreach \j/\c/\s in {0/blue/5pt, 1/red/4pt, 2/red/4pt, 3/blue/5pt,
                              4/black/3pt, 5/black/3pt, 6/blue/5pt,
                              7/red/4pt, 8/red/4pt, 9/blue/5pt} {
            \fill[\c] (90-36*\j:2.2) circle (\s);
            \node at (90-36*\j:2.65) {$v_{\j}$};
        }
    \end{tikzpicture}
    \caption{The graph in Example \ref{ex:cycle}.}
    \label{F:c10}
\end{figure}

\subsection{Caterpillars}
A tree is called a caterpillar if deleting all its leaves leaves a path (we also admit an empty path in this case). We say that a vertex is a support vertex if it is adjacent to at least one leaf. The spine is the path remaining after deleting all leaves once. For this class, the $\mathrm{v}$-number agrees with the domination number. For the proof of the next result, if \(S\subseteq V(G)\), we write \(G-S\) to denote the induced subgraph obtained by removing the vertices in $S$, and abbreviate \(G-\{v\}\) to \(G-v\). 

\begin{proposition}\label{prop:caterpillar-v}
Let $T$ be a caterpillar with at least two vertices. Then
\[
    \mathrm{v}(\mathcal{N}_T^1)=\gamma_1(T).
\]
\end{proposition}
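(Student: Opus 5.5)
The plan is to prove the two inequalities $\mathrm{v}(\mathcal{N}_T^1)\le\gamma_1(T)$ and $\mathrm{v}(\mathcal{N}_T^1)\ge\gamma_1(T)$ separately, using the combinatorial formula of Theorem~\ref{P:vformula} with $t=1$. As in the proofs for paths and cycles, by \cite[Rem.~3.7]{JLS_v_number_neighborhood} it suffices to take, for each $d\in D$, a single private neighbor $u_d$ with $N_1[u_d]\cap D=\{d\}$. We then estimate $X=N_1[U]\setminus D$. The degenerate cases are dealt with first. If the spine has at most one vertex, $T$ is a star with $\operatorname{rad}(T)\le 1$, so Proposition~\ref{prop:v-number-radius} gives $\mathrm{v}=\delta_1(T)-1=1=\gamma_1(T)$. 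If $T$ is a path, the claim is Theorem~\ref{thm:paths} with $t=1$.

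For the upper bound, I would use the standard fact that a caterpillar has a minimum dominating set $D$ containing every support vertex, with all other members on the spine. For each support vertex $d\in D$, take $u_d$ to be a leaf attached to $d$. Then $N_1[u_d]=\{u_d,d\}$ and the contribution is the single vertex $u_d$. The non-support members of $D$ lie on stretches of the spine containing no support vertices, that is, on path segments. On these segments I would reuse the pairing construction from the proof of Theorem~\ref{thm:paths} with $t=1$. There, consecutive dominators at distance $3$ share the two vertices strictly between them, so each pair contributes $2$ vertices in total, and an end dominator contributes $1$. Adding up gives $|X|\le |D|=\gamma_1(T)$. Some care is needed at the interface between a segment and an adjacent support vertex. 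I expect to handle this by shifting the segment's dominators, exactly as in the path proof, so that each boundary dominator either pairs with its neighbour across a gap of size $2$ or has a private neighbour whose closed neighbourhood meets only one vertex outside $D$.

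For the lower bound, the idea is to build a dominating set of size at most $|X|$. First, every $d\in D$ has a neighbor in $X$. If $u_d\neq d$, then $u_d\in X$ and $u_d$ is adjacent to $d$. If $u_d=d$, then $d$ has no neighbor in $D$, so any neighbor of $d$ lies in $N_1[d]\setminus D\subseteq X$. Moreover, the vertices $u_d$ with $u_d\ne d$ are pairwise distinct. The main obstacle is that $X$ alone need not dominate $T$: vertices dominated only by $D$ can be far from $X$. To get around this, I would argue by induction on the number of spine vertices. Take an end vertex $s$ of the spine, which is a support vertex. Show that $D$, $U$ and $X$ can be modified, without increasing $|X|$, so that $s\in D$ and some leaf at $s$ lies in $U$. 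Then delete $s$, its leaves, and its other spine neighbour's contribution. This produces a smaller caterpillar, or a path, where Theorem~\ref{thm:paths} applies. Along the way one checks that $|X|$ drops by at least $1$ while $\gamma_1$ drops by at most $1$.

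The delicate step is this exchange argument at the end of the spine. It must rule out configurations in which the leaves at $s$ are all in $D$ as self-private neighbors sharing the single $X$-vertex $s$, as happens in a star. The resolution should come from minimality of $D$ together with the fact that then $s\notin D$ and $s\in X$ already pays for all those leaves. I would treat that case first.
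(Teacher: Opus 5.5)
\textbf{Lower bound.} This is where your proposal has a genuine gap. Everything rests on the exchange step at an end $s$ of the spine, and none of it is carried out. That step has three parts: modify $(D,U)$ without increasing $|X|$ so that $s\in D$ with a leaf in $U$; delete $s$, its leaves and ``its other spine neighbour's contribution''; and check that $|X|$ drops by at least $1$ while $\gamma_1$ drops by at most $1$. It is not even clear that the reduced pair is admissible, since after deleting $s$ its spine neighbour may no longer be dominated by what is left of $D$.

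No induction is needed, and you already hold the ingredients. Write $N(d)=N_1[d]\setminus\{d\}$, and split $D=D_{\mathrm{ext}}\cup D_{\mathrm{self}}$ according to whether $u_d\neq d$ or $u_d=d$. You noted two facts: the $u_d$ with $d\in D_{\mathrm{ext}}$ are distinct elements of $X$, and $N(d)\subseteq X$ for $d\in D_{\mathrm{self}}$. In addition, no such $u_{d'}$ lies in $N(d)$, since otherwise $d\in N_1[u_{d'}]\cap D=\{d'\}$. Hence $|X|\geq|D_{\mathrm{ext}}|+|N(D_{\mathrm{self}})|$.

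Now trade each external $u_d$ for $d$ itself. The set $D_{\mathrm{ext}}\cup N(D_{\mathrm{self}})$ dominates $T$: every vertex lies in $N_1[d]$ for some $d\in D$, and if $d\in D_{\mathrm{self}}$ the vertex either lies in $N(d)$ or equals $d$, which has a neighbour in $N(d)$. Thus $|X|\geq\gamma_1(T)$, and this holds for every connected graph on at least two vertices. It is the inequality the paper simply imports from \cite[Thm.~4.1]{JLS_v_number_neighborhood}.

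\textbf{Upper bound.} Your construction is a workable alternative to the paper's, but as written it fails at exactly the interface you flag. In the path proof, an end dominator costs $1$ because its private neighbour is an endpoint of the path, and a stretch has no such vertex. With all supports in $D$, a non-support dominator $a$ has degree $2$, and every private neighbour $u$ of $a$ satisfies $|N_1[u]\setminus D|=2$. So your fallback (a private neighbour meeting only one vertex outside $D$) never occurs.

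What is needed is for the support itself to pair across the gap, using the spine vertex next to it instead of a leaf. Take the caterpillar (not a path) with spine $s_1,a,b,c,s_2$, leaves $\ell_1,\ell_1'$ at $s_1$ and $\ell_2$ at $s_2$. Using leaves for both supports forces cost $4>\gamma_1=3$, while $D=\{s_1,a,s_2\}$, $U=\{\ell_1,b,c\}$ gives $3$.

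A support has only one private neighbour to spend, so it can serve only one side, and you need a global count. A stretch of $g$ vertices between consecutive supports needs $k=\lceil (g-2)/3\rceil$ dominators. Place them at mutual distance $3$, with the last at distance $3$ from the right-hand support. They then pair off at cost $k$ if $k$ is even, and at cost $k+1$, absorbing the right-hand support, if $k$ is odd. Supports and stretches alternate along the spine, so distinct stretches absorb distinct supports, and the total is $\gamma_1(T)$.

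The paper avoids this bookkeeping by induction on $|V(T)|$. It carries a minimum dominating set $D$ containing all supports, together with external private neighbours $U$ satisfying $|U|=|D|$ and $N_1[U]=D\cup U$. At each step it peels one, two or three vertices off the end of a longest path. Its subcase $v_3\in U'$, where $v_2$ replaces the leaf $v_0$, is exactly this absorption.
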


\begin{proof}
The case $T=P_2$ is immediate. For caterpillars with at least three vertices, we prove a stronger statement by induction on $|V(T)|$: there exists a minimum dominating set $D$ containing every support vertex and a set of external private neighbors $U$ such that
\[
    |U|=|D|
    \qquad\text{and}\qquad
    N_1[U]=D\cup U.
\]
A minimum dominating set can always be chosen to contain every support vertex and no leaf, by replacing any chosen leaf with its neighbor. If the spine of $T$ has at most three vertices, take $D$ to be the set of support vertices and choose one leaf adjacent to each support to form $U$. The set $D$ dominates $T$, and any dominating set must contain at least one vertex from each of the disjoint pairs consisting of a support and its chosen leaf. Thus $D$ is minimum, and the required properties follow.

Now let $(v_0,v_1,\ldots,v_\ell)$ be a longest path of $T$. We may assume $\ell\geq 5$. Note that $v_0$ and $v_\ell$ are leaves, and, thus $v_1,v_{\ell-1}$ are support vertices. Since $T$ is a caterpillar, every vertex outside this path is a leaf. We consider three cases.

\emph{Case 1: $\deg(v_1)>2$.}
Let $T'=T-v_0$. The vertex $v_1$ is still a support vertex in $T'$. Minimum dominating sets of both trees can be chosen to contain $v_1$ and exclude $v_0$, so $\gamma_1(T)=\gamma_1(T')$. Apply the induction hypothesis to $T'$ and obtain $D',U'$. Since $v_1\in D'$ and $U'\cap D'=\varnothing$, adding $v_0$ does not change the closed neighborhood of any vertex of $U'$. Thus $D=D'$ and $U=U'$ have the required properties in $T$.

\emph{Case 2: $\deg(v_1)=2$ and $\deg(v_2)>2$.}
Let $T'=T-\{v_0,v_1\}$. The vertex $v_2$ has a leaf neighbor outside the longest path, so it remains a support vertex in $T'$. We have $\gamma_1(T)=\gamma_1(T')+1$: adding $v_1$ to a minimum dominating set of $T'$ dominates $T$, while deleting $v_1$ from a minimum dominating set of $T$ containing both supports $v_1,v_2$ leaves a dominating set of $T'$.

Apply the induction hypothesis to $T'$ and set $D=D'\cup\{v_1\}$ and $U=U'\cup\{v_0\}$. Since $v_2\in D'$, we have $v_2\notin U'$. Hence every vertex of $U'$ retains its private-neighbor property, while $v_0$ is a private neighbor of $v_1$. Moreover, $N_1[U]=D\cup U$ and $|U|=|D|=\gamma_1(T)$.

\emph{Case 3: $\deg(v_1)=\deg(v_2)=2$.}
Remove the terminal copy of $P_3$ by setting $T'=T-\{v_0,v_1,v_2\}$. We first note that $\gamma_1(T)=\gamma_1(T')+1$. Indeed, adding $v_1$ to any dominating set of $T'$ dominates $T$. Conversely, take a minimum dominating set of $T$ containing $v_1$ and excluding $v_0$. Removing $v_1$ and replacing $v_2$ with $v_3$ whenever $v_2$ is present gives a dominating set of $T'$ with at most $\gamma_1(T)-1$ vertices.

Apply the induction hypothesis to $T'$. Put $D=D'\cup\{v_1\}$ and define
\[
    U=
    \begin{cases}
        U'\cup\{v_0\}, & v_3\notin U',\\
        U'\cup\{v_2\}, & v_3\in U'.
    \end{cases}
\]
If $v_3\notin U'$, the closed neighborhoods of the vertices of $U'$ are unchanged, and $v_0$ is a private neighbor of $v_1$. Thus $N_1[U]=D\cup U$.

If $v_3\in U'$, then $v_3\notin D'$ since every vertex of $U'$ is an external private neighbor of $D'$. Consequently, $N_1[v_2]\cap D=\{v_1\}$, so $v_2$ is a private neighbor of $v_1$ (recall that $\deg(v_2)=2$, i.e., $N_1[v_2]=\{v_1,v_2,v_3\}$). Every vertex of $U'$ retains its private-neighbor property, and extending from $T'$ to $T$ adds only $v_2$ to $N_1[U']$. Since $N_1[v_2]=\{v_1,v_2,v_3\}$ and $v_3\in U'$, we again obtain $N_1[U]=D\cup U$. In both subcases, $|U|=|D|=\gamma_1(T)$.

In each case, $D$ also contains every support vertex of $T$, completing the induction. Theorem~\ref{P:vformula} now gives
\[
    \mathrm{v}(\mathcal{N}_T^1)\leq |N_1[U]\setminus D|
    =|U|
    =\gamma_1(T).
\]
The reverse inequality follows from \cite[Thm. 4.1]{JLS_v_number_neighborhood}.
\end{proof}

\begin{example}\label{ex:tree-counterexample}
The equality $\mathrm{v}(\mathcal{N}_T^1)=\gamma_1(T)$ does not hold for every tree. Let $T$ consist of the path $(x,r,y)$ and four disjoint paths $(s_i,q_i,a_i,b_i,c_i,d_i)$, for $1\leq i\leq 4$, with the additional edges $xq_1,xq_2,yq_3,yq_4$. Then $T$ has $27$ vertices and one can check (for example, using the Sage functions \cite{sagemath} from \cite{JLS_v_number_neighborhood}) that $\gamma_1(T)=9$ and $\mathrm{v}(\mathcal{N}_T^1)=10$.
\end{example}

\section{Monotonic behavior of the $\mathrm{v}$-number}\label{S:monotonic}

The asymptotic behavior of algebraic invariants along powers of an ideal is a classical theme in commutative algebra; for instance, associated primes and depth eventually stabilize, while Castelnuovo--Mumford regularity is eventually linear \cite{Brodmann1979,CutkoskyHerzogTrung1999,Kodiyalam2000,brodmann_asymptotic_spread}. 

More recently, Ficarra and Sgroi proved that the $\mathrm{v}$-number of ordinary powers is eventually linear \cite{FicarraSgroi2026}. In this section we consider an analogous problem, but studying the family $t \mapsto \mathrm{v}(\mathcal{N}_G^t)$, i.e., studying the powers of the graph instead of the powers of the ideal. This family need not be monotonic. Consider the two examples below.
\begin{example}
    Consider the path with six vertices $P_6$. A quick calculation reveals the following sequence for the $\mathrm{v}$-number
    \begin{equation*}
       \{\mathrm{v}(\mathcal{N}^t_{P_6})\}^6_{t=1} =  (2, 4, 3, 4, 5, 5).
    \end{equation*}
\end{example}
\begin{example}
For $C_{15}$, the $\mathrm{v}$-number sequence is
\begin{equation*}
    \{\mathrm{v}(\mathcal{N}^t_{C_{15}})\}^{15}_{t=1} =  (6, 8, 12, 8, 10, 12, 14, \dots, 14).
\end{equation*}
\end{example}

One could wonder if the minimum for the $\mathrm{v}$-number occurs at $t=1$ or $t = \operatorname{rad}(G)$. As we see below, this is not the case either.

\begin{example}
    Consider a path of $6$ vertices with two additional vertices attached to the middle vertices (see figure below).

\begin{center}
\begin{tikzpicture}[scale=1]
  \coordinate (x1) at (0,0);
  \coordinate (x2) at (1.5,0);
  \coordinate (x3) at (3,0);
  \coordinate (x4) at (4.5,0);
  \coordinate (x5) at (6,0);
  \coordinate (x6) at (7.5,0);
  \coordinate (y)  at (3,1.5);
  \coordinate (z)  at (4.5,1.5);

  \draw (x1)--(x2)--(x3)--(x4)--(x5)--(x6);
  \draw (x3)--(y);
  \draw (x4)--(z);

  \fill (x1) circle (2pt);
  \fill (x2) circle (2pt);
  \fill (x3) circle (2pt);
  \fill (x4) circle (2pt);
  \fill (x5) circle (2pt);
  \fill (x6) circle (2pt);
  \fill (y) circle (2pt);
  \fill (z) circle (2pt);
\end{tikzpicture}
\end{center}

A quick calculation leads to the following sequence of the $\mathrm{v}$-number for the given graph: 
\begin{equation*}
    \{\mathrm{v}(\mathcal{N}^t_{G})\}^{5}_{t=1} =  (4, 3, 4, 6, 7).
\end{equation*}
As a result we see that the minimum is attained neither at $t=1$ nor at $ t = \operatorname{rad}(G) = 3$. 
\end{example}
For certain families of graphs, such as paths and cycles, we can conclude that the minimum is indeed attained at $t=1$. By the previous example, this is not true for caterpillars.

\begin{proposition}
Let $n\geq 2$. The function $t\mapsto \mathrm{v}(\mathcal{N}_{P_n}^t)$, for $t\geq 1$, attains its minimum at $t=1$.
\end{proposition}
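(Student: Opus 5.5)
We need to show $\mathrm{v}(\mathcal{N}^t_{P_n})\ge \mathrm{v}(\mathcal{N}^1_{P_n})$ for all $t\ge1$. Both sides have closed forms from Theorem~\ref{thm:paths} when $n\ge t+1$, and the remaining range is covered by Proposition~\ref{prop:v-number-radius}. So the plan is to reduce everything to an elementary inequality between ceilings, treating the small-$n$ range separately.

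\textbf{Reference value at $t=1$.} By Theorem~\ref{thm:paths} (valid since $n\ge 2$), $\mathrm{v}(\mathcal{N}^1_{P_n})=\lceil n/3\rceil$.

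\textbf{Case $t\ge n$.} Here $n<t+1$, so Theorem~\ref{thm:paths} does not apply. But $t\ge \operatorname{diam}(P_n)=n-1$, so every $t$-ball is all of $V(P_n)$. Proposition~\ref{prop:v-number-radius} (or Proposition~\ref{prop:radius-diameter-detection}) then gives $\mathrm{v}(\mathcal{N}^t_{P_n})=n-1$. Since $n\ge2$, we have $n-1\ge\lceil n/3\rceil$; indeed $n-1\ge n/3$ whenever $n\ge 3/2$, and $n-1$ is an integer.

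\textbf{Case $1\le t\le n-1$.} Theorem~\ref{thm:paths} gives $\mathrm{v}(\mathcal{N}^t_{P_n})=t\lceil n/(2t+1)\rceil$, so it suffices to show
\[
t\left\lceil \frac{n}{2t+1}\right\rceil\ \ge\ \left\lceil\frac{n}{3}\right\rceil .
\]
Write $k=\lceil n/(2t+1)\rceil$, so that $n\le k(2t+1)$. It is enough to prove $tk\ge n/3$, because $tk$ is an integer. That is, we want $3tk\ge n$. Since $n\le 2tk+k$, it suffices to have $3tk\ge 2tk+k$, i.e.\ $tk\ge k$, which holds because $t\ge1$.

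There is no real obstacle here. The only subtle point is to make sure the cases with $n<t+1$ are handled through the radius/diameter proposition rather than the path formula, and that $n=2$ fits into them; with $t\ge n$ covering that range, both cases are complete.
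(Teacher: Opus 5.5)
Your proof is correct and follows the paper's argument essentially verbatim. You make the same split into $t\geq n$, where $\mathrm{v}(\mathcal{N}^t_{P_n})=n-1\geq\lceil n/3\rceil$, and $t\leq n-1$, where Theorem~\ref{thm:paths} applies, and you close with the same integrality argument; the only cosmetic difference is that you obtain $3tk\geq n$ from $n\leq k(2t+1)$, whereas the paper writes the chain $t\lceil n/(2t+1)\rceil\geq tn/(2t+1)\geq n/3$.
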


\begin{proof}
We may assume that $t\leq n-1$, since otherwise
\(
\mathrm{v}(\mathcal{N}_{P_n}^t)
=n-1
\geq \left\lceil\frac{n}{3}\right\rceil
=\mathrm{v}(\mathcal{N}_{P_n}^1).
\)
By Theorem~\ref{thm:paths}, we have
\[
\mathrm{v}(\mathcal{N}_{P_n}^t)
=t\left\lceil\frac{n}{2t+1}\right\rceil
\geq \frac{tn}{2t+1}
\geq \frac{n}{3},
\]
where the last inequality follows from $t\geq 1$.
Since the $\mathrm{v}$-number is an integer, it follows that
\(
\mathrm{v}(\mathcal{N}_{P_n}^t)
\geq \left\lceil\frac{n}{3}\right\rceil
=\mathrm{v}(\mathcal{N}_{P_n}^1),
\)
as desired.
\end{proof}

\begin{proposition}
Let $n\geq 3$. The function $t\mapsto \mathrm{v}(\mathcal{N}_{C_n}^t)$, for $t\geq 1$, attains its minimum at $t=1$.
\end{proposition}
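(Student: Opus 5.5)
Following the argument just given for paths, we will compare every value directly with $t=1$ using the explicit formula of Theorem~\ref{thm:cycle-v}.

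\emph{The base value.} For $t=1$ and $n>3$, Theorem~\ref{thm:cycle-v} gives
\[
\mathrm{v}(\mathcal{N}_{C_n}^1)=2\left\lceil\frac{n}{6}\right\rceil .
\]
For $n=3$ it gives $\mathrm{v}(\mathcal{N}_{C_3}^1)=n-1=2$, which agrees with the same expression since $2\lceil 3/6\rceil=2$. So in all cases $\mathrm{v}(\mathcal{N}_{C_n}^1)=2\lceil n/6\rceil$, and we must show $\mathrm{v}(\mathcal{N}_{C_n}^t)\geq 2\lceil n/6\rceil$ for every $t\geq1$.

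\emph{Case $n\leq 2t+1$.} Here $\mathrm{v}(\mathcal{N}_{C_n}^t)=n-1$. We check $n-1\geq 2\lceil n/6\rceil$ for $n\geq3$:
\begin{itemize}
\item for $3\leq n\leq 6$, the right side is $2$ and $n-1\geq2$;
\item for $n\geq7$, we have $2\lceil n/6\rceil\leq 2(n+5)/6=(n+5)/3\leq n-1$, since $n+5\leq 3n-3$ holds for $n\geq4$.
\end{itemize}

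\emph{Case $n>2t+1$.} Let $m=n/(2(2t+1))$. Then
\[
\mathrm{v}(\mathcal{N}_{C_n}^t)=2t\lceil m\rceil,
\]
and we compare $t\lceil m\rceil$ with $\lceil n/6\rceil$. First, $t\lceil m\rceil\geq tm=tn/(4t+2)$. Since $t/(4t+2)\geq 1/6$ is equivalent to $6t\geq 4t+2$, i.e.\ $t\geq1$, we get $t\lceil m\rceil\geq n/6$. The left side is an integer, so $t\lceil m\rceil\geq\lceil n/6\rceil$. Multiplying by $2$ gives $\mathrm{v}(\mathcal{N}_{C_n}^t)\geq\mathrm{v}(\mathcal{N}_{C_n}^1)$.

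There is no real obstacle. The only delicate point is that both sides involve ceilings with different denominators. This is handled by dropping the ceiling on the $t$ side to obtain a real lower bound of $n/6$, and then using integrality to recover the ceiling.
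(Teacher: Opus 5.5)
Your proposal is correct and follows essentially the same route as the paper: both read off the explicit values from Theorem~\ref{thm:cycle-v}, split into the cases $n\leq 2t+1$ and $n>2t+1$, and in the second case use $tn/(2(2t+1))\geq n/6$ together with integrality. The differences are minor: you absorb $n=3$ into the base value $2\lceil n/6\rceil$, you verify $n-1\geq 2\lceil n/6\rceil$ explicitly, and you drop the ceiling and use integrality where the paper uses $t\lceil x\rceil\geq\lceil tx\rceil$.
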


\begin{proof}
If $n=3$, then $\mathrm{v}(\mathcal{N}_{C_n}^t)=2$ for every $t\geq 1$ by Theorem~\ref{thm:cycle-v}. Hence assume $n\geq 4$, so that $\mathrm{v}(\mathcal{N}_{C_n}^1)=2\lceil n/6\rceil$ by Theorem~\ref{thm:cycle-v}. If $n\leq 2t+1$, then $\mathrm{v}(\mathcal{N}_{C_n}^t)=n-1\geq 2\lceil n/6\rceil$, since $n\geq 4$. If $n>2t+1$, then, since $t\lceil x\rceil\geq\lceil tx\rceil$ for every real number $x$ and $\frac{tn}{2(2t+1)}\geq\frac{n}{6}$ for $t\geq 1$, Theorem~\ref{thm:cycle-v} gives
\[
\mathrm{v}(\mathcal{N}_{C_n}^t)
=2t\left\lceil\frac{n}{2(2t+1)}\right\rceil
\geq 2\left\lceil\frac{tn}{2(2t+1)}\right\rceil
\geq 2\left\lceil\frac{n}{6}\right\rceil
=\mathrm{v}(\mathcal{N}_{C_n}^1).
\]
\end{proof}

\section{Hamming Graphs}\label{S:hamming}

Let $q$ be a prime power, and consider the space $\mathbb{F}_q^n$ with the Hamming distance. Given two points $y,z\in \mathbb{F}_q^n$, we consider the Hamming distance $d(y,z)=|\{j:y_j\neq z_j\}|$. The associated Hamming graph $H(n,q)$ has the points of $\mathbb{F}_q^n$ as vertices, and there is an edge between two vertices if the Hamming distance between the corresponding points is equal to one. Note that in this section $n$ denotes the length of the words, so $H(n,q)$ has $q^n$ vertices. A connection between the $\mathrm{v}$-number of closed neighborhood ideals and Hamming graphs was given in \cite{JLS_v_number_neighborhood}. 
For $0\le r\le n$, let
\[
V_q(n,r)=\sum_{j=0}^{r}\binom{n}{j}(q-1)^j
\]
 denote the cardinality of a ball of radius $r$ in $H(n,q)$. We have the following bounds for the $\mathrm{v}$-number, in terms of the $t$-domination number. These bounds may not always be stronger than the bound in Lemma \ref{smallestball}, which can also be taken into account to lower bound the $\mathrm{v}$-number.

\begin{theorem}\label{thm:improved-hamming-bound}
Let $2\le t\le n$. For every integer $1\le a\le t-1$, set
\[
r_a=\left\lfloor\frac{t+1-a}{2}\right\rfloor.
\]
Then
\[
\mathrm{v}\big(\mathcal{N}^t_{H(n,q)}\big)
\ge
\min\big\{V_q(n,r_a)-1,\,2^a-1\big\}
\cdot\gamma_t(H(n,q)).
\]
Consequently,
\[
\mathrm{v}\big(\mathcal{N}^t_{H(n,q)}\big)
\ge
\max_{1\le a\le t-1}
\min\big\{V_q(n,r_a)-1,\,2^a-1\big\}
\cdot\gamma_t(H(n,q)).
\]
\end{theorem}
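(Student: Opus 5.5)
We use Theorem~\ref{P:vformula}. Fix a minimal $t$-dominating set $D$ of $H=H(n,q)$, with $m=|D|\ge\gamma_t(H)$, and a set $U\subseteq P^t_H[D]$ that $t$-dominates $D$. As in the proofs for paths and cycles (see \cite[Rem.~3.7]{JLS_v_number_neighborhood}), we may assume that $U$ contains exactly one vertex $u_d$ for each $d\in D$, with $N_t[u_d]\cap D=\{d\}$. It then suffices to show
\[
|N_t[U]\setminus D|\ \ge\ \min\{V_q(n,r_a)-1,\,2^a-1\}\cdot m .
\]
The plan is to attach to each $d\in D$ a set $A_d\subseteq N_t[u_d]\setminus\{d\}$ of the required size, and to argue that these sets are pairwise disjoint. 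Note that $A_d\cap D=\emptyset$ is automatic from privacy once $A_d\subseteq N_t[u_d]\setminus\{d\}$.

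Write $s=d(d,u_d)\le t$ and split into two cases, with threshold $a$.

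\emph{Case $s\ge a$.} Take $A_d$ to be the geodesic interval between $d$ and $u_d$ with $d$ removed; this interval consists of the words that agree with $d$ and $u_d$ off their $s$ differing coordinates and take, on each differing coordinate, either the value of $d$ or that of $u_d$. It has $2^s\ge 2^a$ elements. Each such $z$ satisfies $d(z,u_d)\le s\le t$, so $z\in N_t[u_d]$, and therefore $|A_d|\ge 2^a-1$.

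\emph{Case $s\le a-1$.} Take $A_d=N_{r_a}[d]\setminus\{d\}$. Since $r_a\le (t+1-a)/2\le t-a+1$, we get $s+r_a\le t$, so $N_{r_a}[d]\subseteq N_t[u_d]$ and $|A_d|=V_q(n,r_a)-1$.

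The main obstacle is disjointness of $A_d$ and $A_{d'}$ for $d\ne d'$. The choice $r_a=\lfloor (t+1-a)/2\rfloor$ is what should make this work. If $z\in A_d\cap A_{d'}$, I would bound $d(z,d')$, or $d(z,u_{d'})$, in terms of $r_a$, $a$ and $t$, and deduce that $d'\in N_t[u_d]$ or $d\in N_t[u_{d'}]$, contradicting privacy. There are three subcases: ball--ball, ball--interval, and interval--interval.
\begin{itemize}
\item \emph{Ball--ball and ball--interval.} Here I would use that a point of an interval lies on a geodesic, so it is within distance $s$ of $u_d$. Then the triangle inequality, combined with $2r_a+a-1\le t$, should give the contradiction.
\item \emph{Interval--interval.} This case is the delicate one. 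I would first try the coordinate structure of the Hamming metric: write $z$ coordinatewise as agreeing with $d$ or with $u_d$, and likewise with $d'$ or $u_{d'}$, and compare the supports of the differences. If a direct contradiction fails, the fallback is to shrink the intervals, keeping only points at distance at least $a$ from both endpoints (or a suitable sub-cube of dimension $a$ inside the interval), so that they are far from every other vertex of $D$ while still having size at least $2^a-1$.
\end{itemize}

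Once disjointness is established, summing $|A_d|$ over $d\in D$ and using $m\ge\gamma_t(H)$ gives the first inequality. The second inequality follows by maximizing over $1\le a\le t-1$.
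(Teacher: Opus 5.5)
The sets $A_d$ you attach are exactly the paper's sets $B_c$ with $c$ removed. They are the ball $N_{r_a}[d]$ when $s_d=d(d,u_d)\le a-1$, and the geodesic cube $H_d$ between $d$ and $u_d$ when $s_d\ge a$. The containments in $N_t[u_d]$, the counts, and your ball--ball argument via $d(u_d,d')\le (a-1)+2r_a\le t$ are all correct.

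\textbf{Gap: interval--interval disjointness.} Disjointness, which you rightly call the crux, is not proved, and in the interval--interval case no coordinate analysis is needed. Suppose $z\in H_d\cap H_{d'}$. Use the geodesic identities $d(z,d)+d(z,u_d)=s_d$ and $d(z,d')+d(z,u_{d'})=s_{d'}$ together with privacy:
\[
t+1\le d(u_d,d')\le d(u_d,z)+d(z,d'),\qquad t+1\le d(u_{d'},d)\le d(u_{d'},z)+d(z,d).
\]
Adding the two inequalities gives $2t+2\le s_d+s_{d'}\le 2t$, a contradiction.

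Your fallback of shrinking the intervals would break the count. On $H_d$ one has $d(z,d)+d(z,u_d)=s_d$, so points at distance at least $a$ from both endpoints exist only if $s_d\ge 2a$. For $a\le s_d<2a$ nothing would remain.

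\textbf{Ball--interval case needs the exact identity.} Knowing only $d(z,u_d)\le s_d$ gives $d(u_d,d')\le s_d+r_a$, which can exceed $t$. Let $z\in H_d\cap N_{r_a}[d']$ with $s_d\ge a>s_{d'}$, and set $b=d(z,d)$.
\begin{itemize}
\item Privacy of $u_d$ gives $t+1\le (s_d-b)+r_a$. Since $s_d\le t$, this forces $b\le r_a-1$, and hence $d(d,d')\le b+r_a\le 2r_a-1\le t-a$.
\item Privacy of $u_{d'}$ gives $d(d,d')\ge d(u_{d'},d)-d(u_{d'},d')\ge t+1-s_{d'}\ge t-a+2$.
\end{itemize}
These two bounds contradict each other.

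With these two cases filled in, your argument is complete and coincides with the paper's proof.
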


\begin{proof}
Let $D$ be a minimal $t$-dominating set of $H(n,q)$, and let
$U\subseteq P^t_{H(n,q)}[D]$ $t$-dominate $D$. By \cite[Rem. 3.7]{JLS_v_number_neighborhood}, it suffices to consider $U$ such that $|U| = |D|$. Thus let
$
U=\{u_c:c\in D\}$ with $
N_t[u_c]\cap D=\{c\}$ i.e., we index the elements of $U$ by the vertices that $t$-dominate in $D$. 
Set
\[
s_c:=d(u_c,c).
\]
For distinct $c,c'\in D$, we have
$
s_c\le t,
$ and
$d(u_c,c')\ge t+1.
$
By the triangle inequality, we obtain 
$
d(c,c')
\ge d(u_c,c')-d(u_c,c)
\ge t+1-s_c.
$
Interchanging $c$ and $c'$ gives
\begin{equation}\label{eq:hamming-separation}
d(c,c')\ge t+1-\min\{s_c,s_{c'}\}.
\end{equation}

Let $
J_c=\{j\in\{1,\ldots,n\}:(u_c)_j\neq c_j\}$
be the support of the Hamming difference between $u_c$ and $c$. By definition, we have  $|J_c|=s_c$. For $A\subseteq J_c$, let $h_c(A)\in \F_q^n$ be the word
which agrees with $c$ on the coordinates in $A$ and with $u_c$ on
the remaining coordinates, and set
\[
H_c=\{h_c(A):A\subseteq J_c\}.
\]
The words in $H_c$ are distinct, so
$
|H_c|=2^{s_c}.
$
Moreover, for every $h\in H_c$,
\begin{equation}\label{eq:hamming-geodesic}
d(h,c)+d(h,u_c)=s_c.
\end{equation}

For every $c\in D$, define
\[
B_c=
\begin{cases}
N_{r_a}[c],&s_c\le a-1,\\[1mm]
H_c,&s_c\ge a.
\end{cases}
\]
We first observe that
$
B_c\subseteq N_t[u_c].
$
Indeed, if $s_c\le a-1$ and $x\in N_{r_a}[c]$, then
\[
d(x,u_c)
\le d(x,c)+d(c,u_c)
\le r_a+a-1
\le t.
\]
If $s_c\ge a$ and $h\in H_c$, then by
\eqref{eq:hamming-geodesic} we have
$
d(h,u_c)\le s_c\le t.
$

Since $N_t[u_c]\cap D=\{c\}$ and $c\in B_c$, it follows that
\begin{equation}\label{eq:hamming-intersection-D}
B_c\cap D=\{c\}.
\end{equation}

We claim that the sets $B_c$, for $c\in D$, are pairwise disjoint.
Let $c,c'\in D$ be distinct. We consider three cases.

\medskip

\noindent
\textit{Case 1.} Suppose $s_c,s_{c'}\le a-1$. By
\eqref{eq:hamming-separation} we have
$
d(c,c')\ge t-a+2.
$
On the other hand,
\[
2r_a\le t+1-a<t-a+2.
\]
Therefore
$
N_{r_a}[c]\cap N_{r_a}[c']=\emptyset.
$

\medskip

\noindent
\textit{Case 2.} Suppose $s_c,s_{c'}\ge a$. Assume that
$h\in H_c\cap H_{c'}$. Set
$
b=d(h,c),
$ and $
b'=d(h,c').$
By \eqref{eq:hamming-geodesic} we have $
d(h,u_c)=s_c-b$ and 
$
d(h,u_{c'})=s_{c'}-b'.$
Since $u_c$ is $t$-private to $c$ and $u_{c'}$ is $t$-private to
$c'$, we obtain
\[
t+1
\le d(u_c,c')
\le d(u_c,h)+d(h,c')
=s_c-b+b'
\]
and, symmetrically,
\[
t+1
\le d(u_{c'},c)
\le s_{c'}-b'+b.
\]
Adding these inequalities gives
\[
2t+2\le s_c+s_{c'}\le2t,
\]
a contradiction.

\medskip

\noindent
\textit{Case 3.} Suppose, after possibly exchanging $c$ and $c'$,
that $
s_c\ge a $ and $
s_{c'}\le a-1.$
Assume that $h\in H_c\cap N_{r_a}[c']$, and put
$
b=d(h,c).
$
Using \eqref{eq:hamming-geodesic}, we have
\[
t+1
\le d(u_c,c')
\le d(u_c,h)+d(h,c')
\le s_c-b+r_a.
\]
Therefore
\[
b\le s_c+r_a-t-1\le r_a-1.
\]
It follows that
\[
d(c,c')
\le d(c,h)+d(h,c')
\le b+r_a
\le2r_a-1
\le t-a.
\]
However, \eqref{eq:hamming-separation} gives
\[
d(c,c')\ge t+1-s_{c'}\ge t-a+2,
\]
which is a contradiction.

Thus, the sets $B_c$ are pairwise disjoint. If $s_c\le a-1$, then
by \eqref{eq:hamming-intersection-D},
\[
|B_c\setminus D|
=
|N_{r_a}[c]|-1
=
V_q(n,r_a)-1.
\]
If $s_c\ge a$, then
\[
|B_c\setminus D|
=
|H_c|-1
=
2^{s_c}-1
\ge2^a-1.
\]
Since the sets $B_c$ are pairwise disjoint and
$B_c\subseteq N_t[u_c]$, we conclude that
\[
\begin{aligned}
|N_t[U]\setminus D|
&\ge
\sum_{c\in D}|B_c\setminus D|\\
&\ge
\min\big\{V_q(n,r_a)-1,\,2^a-1\big\}|D|\\
&\ge
\min\big\{V_q(n,r_a)-1,\,2^a-1\big\}
\gamma_t(H(n,q)).
\end{aligned}
\]
The first inequality now follows from Theorem~\ref{P:vformula}. Since the
inequality holds for every $1\le a\le t-1$, taking the maximum over
$a$ gives the second inequality.
\end{proof}

\begin{remark}\label{rem:perfect-codes}
A code $C\subseteq\mathbb F_q^n$ is $t$-perfect if the balls $N_t[c]$, for $c\in C$, partition $\mathbb F_q^n$. Hence $C$ is a $t$-dominating set and $|C|V_q(n,t)=q^n$. On the other hand, every $t$-dominating set $D$ satisfies $q^n\leq |D|V_q(n,t)$. Therefore,
\[
\gamma_t(H(n,q))=|C|=\frac{q^n}{V_q(n,t)}.
\]
For $1\leq t<n$, the possible parameters of $t$-perfect codes are precisely those of the following codes~\cite{existing_perfe}: the $q$-ary Hamming codes $[n,n-r,3]_q$, where $n=(q^r-1)/(q-1)$ and $r\geq2$, for $t=1$; the binary repetition codes $[2t+1,1,2t+1]_2$; and the Golay codes $[23,12,7]_2$ and $[11,6,5]_3$, for $t=3$ and $t=2$, respectively. Thus the Hamming codes give $\gamma_1(H(n,q))=q^{n-r}$, while
\[
\gamma_t(H(2t+1,2))=2,\qquad
\gamma_3(H(23,2))=2^{12},\qquad
\gamma_2(H(11,3))=3^6.
\]
For $t=n$, codes with a single codeword are perfect and
$\gamma_n(H(n,q))=1$.
\end{remark}

\begin{remark}
Taking $a=t-1$ gives $r_a=1$ and
\[
V_q(n,1)-1=n(q-1).
\]
Hence Theorem~\ref{thm:improved-hamming-bound} gives the simpler bound
\[
\mathrm{v}\big(\mathcal{N}^t_{H(n,q)}\big)
\ge
\min\{n(q-1),\,2^{t-1}-1\}
\gamma_t(H(n,q)).
\]
The improvement can be strict. For example, let $n=20$, $q=2$, and
$t=8$. Taking $a=5$ gives $r_a=2$ and
\[
\min\{V_2(20,2)-1,\,2^5-1\}
=
\min\{210,31\}
=
31,
\]
whereas the choice $a=t-1$ gives the coefficient $20$.
\end{remark}

\begin{proposition}\label{prop:hamming-radius-n-minus-one}
Let $n\ge2$ and $q\ge2$. Then
\[
\mathrm{v}\big(\mathcal{N}^{n-1}_{H(n,q)}\big)=q^n-q.
\]
\end{proposition}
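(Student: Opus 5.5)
The plan is to prove the two inequalities separately, using Theorem~\ref{P:vformula} and Proposition~\ref{prop:v-number-radius}. Throughout, $N_{n-1}[v]$ is everything except the $(q-1)^n$ words at Hamming distance exactly $n$ from $v$, that is, the words differing from $v$ in every coordinate. Since $\operatorname{rad}(H(n,q))=n>n-1$, we are in the second case of Proposition~\ref{prop:v-number-radius}.

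\emph{Upper bound.} I claim $\alpha_{n-1}(H(n,q))=q$, and then Proposition~\ref{prop:v-number-radius} gives $\mathrm{v}\le q^n-q$.
\begin{itemize}
\item An $(n-1)$-independent set consists of words pairwise at distance $n$. Such words have pairwise distinct first coordinates, so the set has at most $q$ elements.
\item The $q$ constant words $(a,\dots,a)$, $a\in\mathbb F_q$, are pairwise at distance $n$, so the bound $q$ is attained.
\end{itemize}
Concretely, take $D=U=\{(a,\dots,a)\}$. Each $d\in D$ is a self $(n-1)$-private neighbor, and $|N_{n-1}[D]\setminus D|\le q^n-q$.

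\emph{Lower bound.} Let $D$ be a minimal $(n-1)$-dominating set. By \cite[Rem.~3.7]{JLS_v_number_neighborhood}, it suffices to take $U=\{u_c: c\in D\}$ with $N_{n-1}[u_c]\cap D=\{c\}$, i.e.\ $d(u_c,c')=n$ for all $c'\ne c$.
\begin{itemize}
\item First, $|D|\ge2$: a single vertex does not $(n-1)$-dominate, because its antipodal words exist for $q\ge2$. Since the $u_c$ are private to distinct elements of $D$, they are pairwise distinct.
\item Since $D\subseteq N_{n-1}[U]$, the target $|N_{n-1}[U]\setminus D|\ge q^n-q$ is equivalent to $|W|\le q$, where
\[
W=\bigl(V\setminus N_{n-1}[U]\bigr)\cup D .
\]
\item Every $w\in W$ is at distance $n$ from every $u_{c}$, except for the single pair $w=c$, $u=u_c$. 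So $W$ is "almost totally antipodal" to $U$.
\end{itemize}
The natural strategy is to find one coordinate $j$ on which the elements of $W$ take pairwise distinct values, which gives $|W|\le q$.

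The case $q=2$ is the model case. Being at distance $n$ from $u$ means being its complement, so each $u_c$ has a unique antipode. Then:
\begin{itemize}
\item If $|D|\ge3$, every $c'\ne c$ equals the complement of $u_c$, so two distinct elements of $D$ would coincide. This forces $|D|=2$.
\item For $|D|=2$ the two $u_c$ are distinct, so $V\setminus N_{n-1}[U]$ is empty. Hence $|W|=|D|=2=q$.
\end{itemize}

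The main obstacle is the general-$q$ count, where a single pair $u_c,u_{c'}$ is not enough. The complement of $N_{n-1}[u]\cup N_{n-1}[u']$ has $(q-2)^s(q-1)^{n-s}$ elements, where $s=d(u,u')$, and this is far larger than $q$. So I would argue inductively over all of $U$. The first attempt is to order the elements of $D$ and track, coordinate by coordinate, which values each $u_c$ forbids for the elements of $W$, aiming to show that the surviving candidates must use distinct symbols in some fixed coordinate. Failing that, the fallback is to adapt the disjoint-sets argument of Theorem~\ref{thm:improved-hamming-bound}. There one would exhibit, for each $c\in D$, a set $B_c\subseteq N_{n-1}[u_c]$ with $B_c\cap D=\{c\}$, chosen large enough that the union of the $B_c$ misses at most $q-|D|$ vertices outside $D$.
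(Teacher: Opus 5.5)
\textbf{There is a genuine gap in the lower bound for $q\ge3$.} Your upper bound is correct and is the paper's construction (the $q$ constant words with $D=U$), and your $q=2$ analysis is also correct. For $q\ge3$, however, after reducing the lower bound to $|W|\le q$ you only list strategies you would try. So the inequality $\mathrm{v}(\mathcal{N}^{n-1}_{H(n,q)})\ge q^n-q$ is never established. The fallback is also not promising. The disjoint sets of Theorem~\ref{thm:improved-hamming-bound} give bounds of size $\min\{V_q(n,r_a)-1,2^a-1\}\cdot\gamma_t$, far below $q^n-q$. Here the sets $B_c$ would have to cover all but at most $q-|D|$ vertices outside $D$, i.e.\ essentially the whole vertex set.

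\textbf{The missing idea} is a structural description of $(n-1)$-domination in $H(n,q)$. Write $\pi_j(D)=\{c_j:c\in D\}$. A word $x$ is not $(n-1)$-dominated by $D$ exactly when $x_j\notin\pi_j(D)$ for every $j$. Hence $D$ is $(n-1)$-dominating if and only if $\pi_j(D)=\mathbb{F}_q$ for some coordinate $j$. This is the coordinate your ``natural strategy'' needs; fix it.

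Privacy, i.e.\ $d(u_c,c')=n$, gives $(u_c)_j\neq c'_j$ for all $c'\in D\setminus\{c\}$. Suppose two distinct elements $c,c''\in D$ had $c_j=c''_j$. Then $\{c'_j:c'\in D\setminus\{c\}\}=\mathbb{F}_q$, and $u_c$ would have no admissible $j$-th coordinate. Hence $c\mapsto c_j$ is a bijection $D\to\mathbb{F}_q$, so $|D|=q$, and consequently $(u_c)_j=c_j$ for every $c\in D$.

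Therefore $\pi_j(U)=\mathbb{F}_q$, so $U$ is itself $(n-1)$-dominating. Then $V\setminus N_{n-1}[U]=\emptyset$, and $W=D$ has exactly $q$ elements, which is the bound you wanted. This is the paper's argument; it obtains $|D|=q$ from minimality via $\pi_j(D\setminus\{c\})\neq\mathbb{F}_q$, which is equivalent. The same argument also covers your $q=2$ case without needing unique antipodes.
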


\begin{proof}
For $1\le j\le n$ and $D\subseteq\mathbb F_q^n$, let $
\pi_j(D)=\{c_j:c\in D\}
$ be the set of $j$th coordinates of $D$. A word $x\in\mathbb F_q^n$ is not $(n-1)$-dominated by $D$ if and
only if
$
d(x,c)=n
$
for every $c\in D$. This is equivalent to
$
x_j\notin\pi_j(D)
$
for every $1\le j\le n$. Therefore, $D$ is an $(n-1)$-dominating set
if and only if
\begin{equation}\label{eq:full-projection}
\pi_j(D)=\mathbb F_q
\end{equation}
for some coordinate $j$.

Let $D$ be a minimal $(n-1)$-dominating set, and fix a coordinate
$j$ satisfying \eqref{eq:full-projection}. For every $c\in D$, the
set $D\setminus\{c\}$ is not $(n-1)$-dominating. Hence
$
\pi_j(D\setminus\{c\})\neq\mathbb F_q.$
It follows that $c_j$ does not occur as the $j$-th coordinate of any
other element of $D$. Thus, the map
\[
D\longrightarrow\mathbb F_q,
\qquad
c\longmapsto c_j,
\]
is both injective and surjective. Consequently,
\begin{equation}\label{eq:minimal-hamming-size}
|D|=q.
\end{equation}

Let $U=\{u_c:c\in D\}\subseteq P^{n-1}_{H(n,q)}[D]$ $(n-1)$-dominate
$D$, where
$
N_{n-1}[u_c]\cap D=\{c\}.
$
For every $c'\in D\setminus\{c\}$, privacy gives
$
d(u_c,c')=n.
$
In particular,
$
(u_c)_j\neq c'_j
$
for every $c'\neq c$. By \eqref{eq:full-projection} and
\eqref{eq:minimal-hamming-size}, the set
\[
\{c'_j:c'\in D\setminus\{c\}\}
\]
contains every element of $\mathbb F_q$ except $c_j$. Therefore
$
(u_c)_j=c_j.
$
It follows that
$
\pi_j(U)=\mathbb F_q,
$
so $U$ is itself an $(n-1)$-dominating set. Hence
\[
N_{n-1}[U]=V(H(n,q))
\]
and
\[
|N_{n-1}[U]\setminus D|
=
q^n-|D|
=
q^n-q.
\]
Since $(D,U)$ was an arbitrary pair such that $U\subseteq P_G^t[D]$ and $U$ $t$-dominates $D$, Theorem~\ref{P:vformula}
gives
\[
\mathrm{v}\big(\mathcal{N}^{n-1}_{H(n,q)}\big)\ge q^n-q.
\]

For the reverse inequality, consider
\[
D=\{(a,a,\ldots,a):a\in\mathbb F_q\}.
\]
The set $D$ is $(n-1)$-dominating, and any two distinct elements of
$D$ are at distance $n$. Hence every $c\in D$ is a self
$(n-1)$-private neighbor of $D$. Taking $U=D$, Theorem~\ref{P:vformula} gives
\[
\mathrm{v}\big(\mathcal{N}^{n-1}_{H(n,q)}\big)
\le
|N_{n-1}[D]\setminus D|
=
q^n-q.
\]
Combining the two inequalities proves the result.
\end{proof}

\section*{Declarations}
\subsection*{Conflict of interest} The authors declare no conflict of interest.

\bibliographystyle{abbrv}
\bibliography{bibliography}

\end{document}